\pdfoutput=1

\documentclass[11pt,a4paper,reqno]{amsart}

\makeatletter
\renewcommand{\paragraph}{%
  \@startsection{paragraph}{4}%
  {\z@}{\bigskipamount}{-1em}%
  {\normalfont\normalsize\bfseries}%
}
\makeatother

\usepackage[english]{babel}	

\usepackage{kotex}

\usepackage[hyphens]{url}
\usepackage[hidelinks,hyperindex,breaklinks]{hyperref} 
    \hypersetup{
        breaklinks=true,
        colorlinks=false,
        pdfusetitle=true, 
    }

\usepackage{xifthen}

\usepackage{caption} 

\usepackage[normalem]{ulem} 

\usepackage[shortlabels]{enumitem}

\usepackage{float}

\usepackage[backend=biber,style=authoryear,autocite=inline,maxcitenames=2]{biblatex}

    \addbibresource{main.bib}
    
    \usepackage{csquotes} 

\usepackage{amsmath}

\usepackage{mathtools} 

\usepackage{leftidx}

\usepackage{tikz-cd}

\usepackage{amsthm}

\makeatletter
\renewenvironment{proof}[1][\proofname]{\par
  \normalfont \topsep6\p@\@plus6\p@\relax
  \trivlist
  \item[\hskip\labelsep
        \itshape
    #1\@addpunct{.}]\ignorespaces
}{%
  \endtrivlist\@endpefalse
}
\makeatother

\newtheorem{theorem}{Theorem}[section]
\newtheorem{``theorem''}[theorem]{``Theorem''}
\newtheorem{lemma}[theorem]{Lemma}
\newtheorem{proposition}[theorem]{Proposition}
\newtheorem{corollary}[theorem]{Corollary}

\theoremstyle{definition}
\newtheorem{definition}[theorem]{Definition}
\newtheorem{``definition''}[theorem]{``Definition''}
\newtheorem{construction}[theorem]{Construction}
\newtheorem{terminology}[theorem]{Terminology}
\newtheorem{example}[theorem]{Example}
\newtheorem{remark}[theorem]{Remark}

\usepackage[stable]{footmisc} 

\newcommand{\rdsh}{\mathrel{\reflectbox{\rotatebox[origin=c]{270}{$\drsh$}}}}
\newcommand{\lush}{\mathrel{\reflectbox{\rotatebox[origin=c]{90}{$\drsh$}}}}

\newcommand{\veq}{{\rotatebox[origin=c]{90}{=}}}

\usepackage{mathabx} 

\newcommand{\defeq}{\stackrel{\text{def.}}{=}}

\newcommand{\xto}[1]{\stackrel{#1}{\to}}

\newcommand{\id}{\ensuremath{{ \mathrm{id} }}}
\newcommand{\Ob}{{ \mathrm{Ob} }}
\newcommand{\ev}{{ \mathrm{ev} }}

\newcommand{\Cat}{{ \textit{Cat} }}
\newcommand{\CAT}{{ \textsc{Cat} }}

\newcommand{\Prof}{{ \textit{Prof} }}
\newcommand{\PROF}{{ \textsc{Prof} }}


\newcommand{\Cone}{\ensuremath{{ \mathrm{Cone} }}}

\def\colim{\qopname\relax m{colim}}

\newcommand{\C}{{ \mathcal{C} }}

\newcommand{\X}{\ensuremath{{ \mathcal{X} }}}

\newcommand{\U}{{ \mathcal{U} }}
\newcommand{\V}{{ \mathcal{V} }}
\newcommand{\W}{{ \mathcal{W} }}

\newcommand{\op}{{ \mathrm{op} }}
\newcommand{\co}{{ \mathrm{co} }}

\newcommand{\Hom}{\mathrm{Hom}}

\newcommand{\Set}{{ \textit{Set} }}
\newcommand{\SET}{{ \textsc{Set} }}

\renewcommand{\:}{\colon\allowbreak}

\newcommand{\ph}{{(-)}}

\usepackage{adjustbox}

\RequirePackage{tikz}
\usetikzlibrary{svg.path}

\definecolor{orcidlogocol}{HTML}{000000} 
\tikzset{
  orcidlogo/.pic={
    \fill[orcidlogocol] svg{M256,128c0,70.7-57.3,128-128,128C57.3,256,0,198.7,0,128C0,57.3,57.3,0,128,0C198.7,0,256,57.3,256,128z};
    \fill[white] svg{M86.3,186.2H70.9V79.1h15.4v48.4V186.2z}
                 svg{M108.9,79.1h41.6c39.6,0,57,28.3,57,53.6c0,27.5-21.5,53.6-56.8,53.6h-41.8V79.1z M124.3,172.4h24.5c34.9,0,42.9-26.5,42.9-39.7c0-21.5-13.7-39.7-43.7-39.7h-23.7V172.4z}
                 svg{M88.7,56.8c0,5.5-4.5,10.1-10.1,10.1c-5.6,0-10.1-4.6-10.1-10.1c0-5.6,4.5-10.1,10.1-10.1C84.2,46.7,88.7,51.3,88.7,56.8z};
  }
}

\usepackage{scalerel}

\newcommand{\orcidurl}[1]{\href{https://orcid.org/#1}{\scalerel*{%
\begin{tikzpicture}[yscale=-1,transform shape]%
\pic{orcidlogo};%
\end{tikzpicture}
}{\texttt{|}}\,\texttt{https://orcid.org/#1}}}

\makeatletter
\def\@setthanks{\def\thanks##1{\par##1}\thankses}
\makeatother

\title{Indexed profunctors over 2-categories}

\author{Sori Lee}

\thanks{\emph{Date}: 13 Feb 2023. \orcidurl{0000-0002-3911-8909}}

\begin{document}

\maketitle

\begin{abstract}
We define the notion of an indexed profunctor over a 2-category, and use it to develop an abstract theory of limits.
The theory subsumes (conical) limits, weighted limits, ends and Kan extensions.
Results include an abstract version of the theorem that right adjoint functors preserve limits, and an abstract account of the phenomenon that the comparison arrow implicated in the preservation of a limit by a functor is natural in any functorial variable the functor happens to depend on.
These results make an extensive use of the data and axioms of an indexed profunctor over a 2-category.
\end{abstract}

\tableofcontents

\section{Introduction}

These notes define indexed profunctors over 2-categories and initiate their study.
The focus is on their use for an abstract theory of limits.
The theory subsumes (conical) limits, weighted limits, ends and Kan extensions.

In general terms, indexed profunctors are the indexed version of profunctors, just as indexed categories are of categories.
Indexed profunctors over 1-categories appear in a few places in the literature.
\textcite{wood_abstract_1982} makes a brief mention of them as an example of proarrows.
\textcite{koudenburg_algebraic_2012} uses indexed profunctors over the particular 1-category $(0 \rightrightarrows 1)$ for describing weak variants of double categories as corresponding weak algebras for the free-strict-double-category monad.
\textcite{shulman_enriched_2013} defines an enriched version of indexed profunctors over 1-categories while developing the theory of enriched indexed categories.
Indexed profunctors over 2-categories appear unexplored.

Here is an outline of how indexed profunctors over 2-categories naturally arise as a framework for abstract limits.
The starting point is ordinary profunctors.
Recall that if $R$ and $S$ are categories, then a \emph{profunctor} $R \to S$ is a functor $R^\op \times S \to \SET$.
Given a profunctor $H\: R \to S$ and an object $s \in S$, we say that an object $r \in R$ together with an element $h \in H(r,s)$ is a \emph{limit} of $s$ if the pair $(r,h)$ is a universal element of the functor $H(-,s)\: R^\op \to \SET$.
Now, if $I$ and $X$ are categories, we may consider a profunctor $\Cone_X\: X \to \CAT(I,X)$
given by $\Cone_X(x,d) = \CAT(I,X)(\Delta_x,d)$.
Clearly, a limit of an object $d \in \CAT(I,X)$ in this profunctor is precisely a limiting cone of the diagram $d$.
We will see that this profunctorial notion of a limit subsumes not just conical limits but also the other limit concepts mentioned above.

Plain profunctors, however, are soon met with limitations.
As we attempt to generalise some key theorems about (say conical) limits, we find ourselves in need of more structure.
Consider for instance the theorem stating that right adjoint functors preserve conical limits.
To formulate an abstraction of this statement in terms of profunctorial limits, we need to address not a single profunctor but a family $H_X\: R_X \to S_X$ of profunctors equipped with an appropriate functorial action in $X \in \CAT$.
If $\CAT$ here is just considered a 1-category, then such a family is an indexed profunctor over a 1-category.
Even in this setting, we can formulate the property of a functor (or more generally a 1-cell) preserving a profunctorial limit and hence obtain an abstract version of the theorem's statement.
However, the 1-categorical indexing lacks the necessary information to enable its proof.

To rectify this, we need that $R_\ph$ and $S_\ph$ act functorially on not just functors (1-cells) but also natural transformations (2-cells).
Moreover, crucially, these actions must be compatible with the functorial action of $H_\ph$.
With the addition of these requirements, we arrive at the notion of an indexed profunctor over a 2-category.
The 2-categorical setting notably enables us to also abstract away from the specific base $\CAT$, and state the theorem in terms of adjoint 1-cells in an arbitrary 2-category.
But most importantly, it allows us to prove it.

The `adjoints preserve' theorem (Section \ref{sec:right-adjoints-preserve-limits}) is one of the two abstract theorems proved in these notes that necessitate the data and axioms of an indexed profunctor over a 2-category.
The other theorem gives an abstract account of the phenomenon that the comparison arrow implicated in the preservation of a limit by a functor is natural in any functorial variable the functor happens to depend on (Section \ref{sec:parametrised-preservation}).
It was the problem that prompted this work.
The two, though small in number, are offered as exemplary uses of indexed profunctors over 2-categories as a framework for abstract limits.

On the other hand, we will also come across abstract results about limits for which the structure of an indexed profunctor is not necessary.
These will be stated and proved in terms of ordinary profunctors, and (when desirable) indexed-profunctorial formulations will be derived as corollaries.
Examples of such results include the functoriality of limit (Section \ref{sec:functoriality-of-limit}) and a profunctorial analysis of the fact that fully faithful functors reflect limits (Section \ref{sec:fully-faithful-reflect}).

Some essential topics are not addressed in the present form of these notes.
Currently, the justification of the definition of an indexed profunctor over a 2-category is bottom-up and rests solely on its applications in abstract limits.
In order to complement this, further structural examination of the notion is required.
To begin with, a systematic relationship between indexed categories, functors and prfunctors over a 2-category should be established in terms of a framework such as proarrow equipment.
Another undiscussed subject worthy of mention is morphisms of indexed profunctors; to be more precise, morphisms between pairs of indexed categories over a 2-category together with an indexed profunctor between them.
Well-known results in category theory allow reducing one limit notion (e.g. ends) to another (e.g. conical limits).
Morphisms attract attention because a natural class of them describes the structure of such reductions.

These notes develop the theory of indexed profunctors between strictly indexed categories over a strict 2-category.
While the adoption of the fully weak setting of pseudoindexed categories over a bicategory is unlikely to affect the general scheme of the theory, the pseudoindexing in particular does introduce additional technicalities, including in the definition of an indexed profunctor.
For this exploratory work, it seemed reasonable to take advantage of the simplicity of the strict setting as it nevertheless covers the main examples.

Further generalisations could be made in various directions.
One inviting direction is enrichment.
We may for instance seek a common generalisation of this work and Shulman's, and define indexed monoidal categories, enriched indexed categories and enriched indexed profunctors over a 2-category.
Another direction is to use an abstract substitute for profunctors such as proarrows.
Finally, we may go higher.
The experiences from this work anticipate that indexed 2-profunctors over 3-categories would be needed for an effective abstract theory of 2-limits, indexed $(\infty,1)$-profunctors over $(\infty,2)$-categories for abstract $(\infty,1)$-limits, and so forth.
How do these theories work out?

\section{Profunctorial preliminaries}

In this section, we discuss a few ideas central in dealings with ordinary (i.e.\ non-indexed) profunctors.
These will in particular be needed for our first discussions of indexed profunctors in Section \ref{sec:def}, and remain important in all later sections.

Before we discuss profunctors, 
a clarification is in order on how we will deal with sizes of sets.

\paragraph{Convention on sizes}

In these notes, we will use three \emph{relative} sizes of sets as well as structures based on sets: \emph{small}, \emph{medium} and \emph{big}\footnote{Adjective `large' is avoided because it has established meanings that are incompatible with ours such as `non-small'. In contrast, typical notions of `small' (e.g.\ `a set as opposed to a proper class') are deemed largely compatible.}.
The relativity means that we are unspecific about these sizes other than that if $\U$, $\V$ and $\W$ denoted the respective universes in order, then $\U < \V < \W$.
We shall furthermore employ the \emph{polymorphism} convention that these universes are regarded as universally quantified over and over in each breakable context, rather than fixed throughout.

Whenever size matters, structures (such as sets, categories, etc.) shall be tacitly assumed medium unless otherwise specified.
In fact, the definition of a structure with no size specification shall technically mean to define its medium version and tacitly also, by polymorphism, all other relatively small versions (i.e.\ small, big, etc.)
of the structure.

Because of the polymorphism, the choice of a size for an entity being introduced doesn't make any difference in meaning as long as it is not contradictory with respect to the size choices of other entities in the local discourse.
The rule of thumb will be to make a choice that minimises explicit mentions of sizes in the story being told.
The explicit mention of universes will also be avoided unless necessary.
This is to keep the size matter as low-profile as possible.

Symbols for totalities of structures will follow the following pattern.
A totality of small structures will take the italic typeface.
For example, $\Set$ for the category of small sets.
A totality of medium structures will take the small-caps typeface.
For example, $\SET$ for the big category of sets.

\paragraph{Pulling back a profunctor along functors}

Recall that a profunctor $H\: R \to S$ is a functor $R^\op \times S \to \SET$.
In a profunctorial context, we shall refer to $R$ and $S$ as the \emph{domain} and \emph{codomain} of $H$ respectively.

An important operation on a profunctor is to pull it back along functors into its domain and codomain categories.
Specifically, the \emph{pullback} of a profunctor $H\: R \to S$ along functors $f\: R' \to R$ and $g\: S' \to S$ is a profunctor $H(f,g)\: R' \to S'$, defined to be the composite functor
\begin{equation}\label{eq:gyodae}
\begin{tikzcd}[column sep=large]
    R'^\op \times S' \arrow[r, "f^\op \times g"] & R^\op \times S \arrow[r, "H"] & \SET.
\end{tikzcd}
\end{equation}
 
\begin{remark}
One should beware not to confuse this profunctor $H(f,g)\: R' \to S'$ with the function $H(a,b)\: H(r_2,s_1) \to H(r_1,s_2)$ for arrows $a\: r_1 \to r_2$ in $R$ and $b\: s_1 \to s_2$ in $S$.
Note that $H(f,g)$ is essentially -- up to the pedantic `op' -- the multicategorical notation for the composite \eqref{eq:gyodae}, which itself should be drawing on the notation for values of the composite functor: $H(f,g)(r',s') = H(f(r'),g(s'))$ when $r' \in R'$ and $s' \in S'$.
This is a convenient notation, especially compared to $H \circ (f^\op \times g)$, for its intuitiveness and simplicity, and seems in common use in proarrow theory \autocite{nlab:2-category_equipped_with_proarrows}.
\end{remark}

By extension, we can also pull back a natural transformation of profunctors along natural transformations of functors.
Let $H,I\: R \to S$ be profunctors and $\theta\: H \to I$ a natural transformation of profunctors.
Furthermore, let $f_1,f_2\: R' \to R$ and $g_1,g_2\: S' \to S$ be functors and let $\alpha\: f_1 \to f_2$ and $\beta\: g_1 \to g_2$ be natural transformations.
Then the \emph{pullback} of $\theta$ along $\alpha$ and $\beta$ is a natural transformations of profunctors $\theta(\alpha,\beta)\: H(f_2,g_1) \to I(f_1,g_2)$, defined to be the horizontal composite of natural transforamtions
\begin{equation*}
\begin{tikzcd}[column sep=huge]
    R'^\op \times S'
        \arrow[r, bend left=30, "f_2^\op \times g_1", ""{below, name=U}]
        \arrow[r, bend right=30, "f_1^\op \times g_2" swap, ""{name=D}]
        \arrow[Rightarrow, from=U, to=D, "\alpha^\op \times \beta"]
        &
    R^\op \times S
        \arrow[r, bend left=30, "H", ""{below, name=U2}]
        \arrow[r, bend right=30, "I" swap, ""{name=D2}]
        \arrow[Rightarrow, from=U2, to=D2, "\theta"]
        &
    \SET.
\end{tikzcd}
\end{equation*}

These pullbacks will often feature in the discussions, particularly the definition, of indexed profunctors.

\paragraph{Heteromorphisms and heteromorphic diagrams}

\begin{definition}
Let $H\: R \to S$ be a profunctor between categories and let $r \in R$ and $s \in S$ be objects.
An element $h \in H(r,s)$ may be called a \emph{heteromorphism}\footnote{This terminology seems to have been introduced by \textcite{ellerman_theory_2006}.} from $r$ to $s$.
We may write $h\: r \to s$ to indicate that $h$ is from $r$ to $s$.
\end{definition}

The terminology and notation are suggesting that we view elements in a profunctor as a sort of arrows.
In fact, heteromorphisms are a generalisation of arrows, in that the arrows in a category $C$ are precisely the heteromorphisms in the particular profunctor $\Hom_C\: C \to C$.
Importantly, part of this generalisation is the composition:
the pre- and postcomposition of an arrow in $C$ with appropriate composable arrows in $C$ are a special case of the contra- and covariant actions of a profunctor $H\: R \to S$ on its heteromorphisms against appropriate arrows in $R$ and $S$ respectively.
This way, we view the heteromorphisms as `composing' with arrows in $R$ and $S$.

A consequence of this view is a generalised diagrammatic reasoning.
We can consider diagrams in which some of the constituent arrows are heteromorphisms, and make sense of their commutativity.
Such diagrams may be referred to as \emph{heteromorphic diagrams}.
For example, we can consider squares of the form
$$\begin{tikzcd}
r \arrow[r, "h"]
  \arrow[d, "a"] &
s \arrow[d, "b"] \\
r' \arrow[r, "h'"] &
s'
\end{tikzcd}$$
with $h\: r \to s$ and $h'\: r' \to s'$ heteromorphisms in $H$, $a\: r \to r'$ an arrow in $R$ and $b\: s \to s'$ an arrow in $S$.
The \emph{commutativity} of this square refers to the equality $a^*(h') = b_*(h)$ of the two heteromorphisms from $r$ to $s'$.

The benefits of using heteromorphic diagrams are as usual: the pictorial presentation can make an equality more intuitive, oftentimes highlighting a symmetry that is present in the `type' of the equality.
They will have a marked presence throughout these notes.

\paragraph{Profunctorial duality}

Opposite categories are a fundamental operation on categories underpinning categorical duality.
The following defines an operation on profunctors that generalises opposite categories, which in turn will underpin a profunctorial generalisation of categorical duality.

\begin{definition}
Let $H\: R \to S$ be a profunctor.
The profunctor \emph{opposite} to $H$ is the profunctor $H^\op\: S^\op \to R^\op$ given by
$$H^\op(s,r) = H(r,s)$$
for all objects as well as arrows $s \in S$ and $r \in R$.
\end{definition}

Opposite profunctors generalise opposite categories in that
    $\Hom_{C^\op} = \Hom_C^\op$
if $C$ is a category.
Taking the profunctorial opposite is still a strict involution:

\begin{proposition}\label{prop:op-is-involution}
Let $H\: R \to S$ be a profunctor.
Then $(H^\op)^\op = H$.\qed
\end{proposition}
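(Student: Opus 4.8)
The plan is to prove the equality by directly unfolding the definition of the opposite profunctor twice. First I would note that, since taking opposite categories is a strict involution, we have $(R^\op)^\op = R$ and $(S^\op)^\op = S$; consequently the profunctor $(H^\op)^\op$, which a priori goes $(R^\op)^\op \to (S^\op)^\op$, indeed has the same domain $R$ and codomain $S$ as $H$. This preliminary observation ensures that the two sides of the claimed equality are profunctors of the same type, so that asserting their equality is even meaningful.

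It then remains to check that $(H^\op)^\op$ and $H$ agree as functors $R^\op \times S \to \SET$. I would verify this by evaluation: for objects $r \in R$ and $s \in S$, two applications of the defining formula give
$$(H^\op)^\op(r,s) = H^\op(s,r) = H(r,s).$$
Since the definition of the opposite profunctor specifies the assignment on both objects and arrows, this same computation, read first for objects and then identically for arrows, establishes that the two functors have the same object assignment and the same arrow assignment, hence are equal.

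I do not anticipate any genuine obstacle here, as the statement is a purely formal consequence of the definition. The only points requiring care are, first, to invoke the strict involution of opposite categories so that the domains and codomains of the two sides actually coincide, and second, to remember that the verification must be carried out on arrows as well as on objects—though the defining clause of $H^\op$ was stated uniformly for both, so the single line of computation above covers each case without modification.
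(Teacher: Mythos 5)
Your proof is correct and is exactly the definitional unfolding the paper has in mind: the paper marks this proposition with \qed and omits the argument as immediate, and your computation $(H^\op)^\op(r,s) = H^\op(s,r) = H(r,s)$ on objects and arrows, together with the observation that $(R^\op)^\op = R$ and $(S^\op)^\op = S$, is precisely that implicit argument spelled out.
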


Let us now describe the profunctorial duality in terms of opposite profunctors.
The duality may be organised into a metamathematical construction and theorem, as follows.
I have chosen to treat them informally (specifically, I won't make precise what a `statement' is); they so still serve as sufficiently practical a meta-algorithm and justificiation for applications in these notes.

\begin{``definition''}
A statement $\phi(H,r,s,h)$ about an arbitrary heteromorphism $h\: r \to s$ in an arbitrary profunctor $H\: R \to S$ has an associated \emph{dual} statement
    $$\phi^\op(H,r,s,h) := \phi(H^\op,s,r,h)$$
still about an arbitary heteromorphism $h\: r \to s$ in an arbitrary profunctor $H\: R \to S$.
\end{``definition''}

\begin{``theorem''}[Profunctorial duality principle]
Let $\phi(H,r,s,h)$ be as above.
Then $\forall H,r,s,h.\phi(H,r,s,h)$ if and only if $\forall H,r,s,h.\phi^\op(H,r,s,h)$.
\end{``theorem''}

\begin{proof}[``Proof'']
`Only if':
If $\forall H,r,s,h.\phi(H,r,s,h)$, then for any $H,r,s,h$, we have $\phi(H^\op,s,r,h)$, that is, $\phi^\op(H,r,s,h)$.
`If':
If $\forall H,r,s,h.\phi^\op(H,r,s,h)$, that is, $\forall H,r,s,h.\phi(H^\op,s,r,h)$, then for any $H,r,s,h$, we have a profunctor $H^\op\: S \to R$ and a heteromorphism $h\: s \to r$ in $H^\op$, so we have $\phi((H^\op)^\op,r,s,h)$, that is, by Proposition \ref{prop:op-is-involution}, $\phi(H,r,s,h)$, as desired.
\end{proof}

We will encounter examples of the profunctorial duality throughout these notes.
In fact, we are to see one immediately as we next discuss profunctorial limits and colimits.

\paragraph{Limits and colimits in profunctors}

Limits and colimits in profunctors as we will now define are an abstraction of traditional `conical' limits and colimits in categories as well as other (co)limit-like concepts such as weighted (co)limits, (co)ends and Kan extensions, as we will see in Section \ref{sec:eg} when we discuss examples of indexed profunctors.

\begin{definition}
Let $H\: R \to S$ be a profunctor.
We say that a heteromorphism $h\: r \to s$ in $H$ is a \emph{limit} if the pair $(r,h)$ is a universal element of the functor $H(-,s)\: R^\op \to \SET$.
\end{definition}

In this case, we say that the object $s$ is \emph{convergent} or \emph{has a limit} in $H$, and refer to the pair $(r,h)$ as a limit \emph{of} $s$ in $H$.
We say that $H$ \emph{has} limits if each object $s \in S$ has a limit.

Dually, a heteromorphism $h\: r \to s$ in $H$ is a \emph{colimit} if it is a limit as a heteromorphism $s \to r$ in the profunctor $H^\op$.
That is, if the pair $(s,h)$ is a universal element of the functor $H(r,-)\: S \to \SET$.
In this case, we say that the object $r$ is \emph{convergent} or \emph{has a colimit} in $H$, and refer to the pair $(s,h)$ is a colimit \emph{of} $r$ in $H$.
We say that $H$ \emph{has} colimits if each object $r \in R$ has a colimit.

\section{Definition}\label{sec:def}

In what follow, a 2-category shall refer to a strict 2-category, a 2-functor a strict 2-functor and a 2-natural transformation a strict 2-natural transformation.

Recall that an indexed category over a category $C$ is usually defined to be a pseudofunctor $C^\op \to \CAT$, where $\CAT$ denotes the big 2-category of categories.
We need a generalisation of (the strict and covariant version of) this notion in which the base is a 2-category.
I state its definition in a certain verbose fashion, in order to motivate the definition of an indexed profunctor below.

\begin{definition}
A \emph{strict covariant indexed category} $R$ over a 2-category $\X$ consists of
\begin{enumerate}
    \item a category $R_X$ for each 0-cell $X \in \X$,
    \item a functor $R_f\: R_X \to R_Y$ for each 1-cell $f\: X \to Y$ in $\X$, and
    \item a natural transformation $R_\theta\: R_f \to R_g$ for each 2-cell $\theta\: f \to g$ in $\X$
\end{enumerate}
such that the assignment $X \mapsto R_X$ is strictly 2-functorial.

In these notes, we will simply refer to a strict covariant indexed category as an \emph{indexed category}.
\end{definition}

In crisp language, such an indexed category is simply a 2-functor $\X \to \CAT$.

Before we define indexed profunctors between indexed categories, let us first define indexed functors.

\begin{definition}
Let $R$ and $S$ be indexed categories over a 2-category $\X$.
A \emph{strict indexed functor} $k\: R \to S$ consists of a functor $k_X\: R_X \to S_X$ for each 0-cell $X \in \X$ that is strictly natural in $X$.

In these notes, we will simply refer to a strict indexed functor as an \emph{indexed functor}.
\end{definition}

In crisp language, such an indexed functor is simply a 2-natural transformation $R \to S$.

The following is the main definition of these notes.

\begin{definition}
Let $R$ and $S$ be indexed categories over a 2-category $\X$.
An \emph{indexed profunctor} $H\: R \to S$ consists of
\begin{enumerate}
    \item a profunctor $H_X\: R_X \to S_X$ for each 0-cell $X \in \X$, and
    \item a natural transformation $H_f\: H_X \to H_Y(R_f,S_f)$ for each 1-cell $f\: X \to Y$ in $\X$
\end{enumerate}
such that $H_X$ is `functorial' in $X$ and $H_f$ is extranatural in $f$.
\end{definition}

A clarification of the last two axioms is in order.
First, the `functoriality' means the following two things.
\begin{enumerate}[(i)]
    \item\label{item:i} That $H_{\id_X}\: H_X \to H_X(R_{\id_X},S_{\id_X}) = H_X$ equal $\id_{H_X}\: H_X \to H_X$ as natural transformations of profunctors for each object $X \in \X$.
    
    \item\label{item:ii} That $H_{g \circ f}\: H_X \to H_Z(R_{g \circ f},S_{g \circ f}) = H_Z(R_g \circ R_f, S_g \circ S_f)$ equal the (vertical) composite
        $$H_g(R_f,S_f) \circ H_f\: H_X \to H_Z(R_g,S_g)(R_f,S_f) = H_Z(R_g \circ R_f, S_g \circ S_f)$$
    as natural transformations of profunctors for each 1-cells $f\: X \to Y$ and $g\: Y \to Z$ in $\X$.
\end{enumerate}
The extranaturality on the other hand means ordinary extranaturality: that the diagram
\begin{equation}\label{eq:extranaturality-axiom}
\begin{tikzcd}[column sep=large]
H_X \arrow[r, "H_f"]
    \arrow[d, "H_{f'}"] &
H_Y(R_f,S_f)
    \arrow[d, "{H_Y(R_f,S_\theta)}"] \\
H_Y(R_{f'},S_{f'})
    \arrow[r, "{H_Y(R_\theta,S_{f'})}"] &
H_Y(R_f,S_{f'})
\end{tikzcd}
\end{equation}
commute for each 2-cell $\theta\: f \to f'$ between 1-cells $f,f'\: X \to Y$ in $\X$.

\begin{remark}
The axioms of an indexed profunctor, of which the functoriality conditions have been particularly arduous to spell out above, can be represented more naturally using \emph{double-categorical diagrams of profunctors}.

In such a diagram, a node represents a category, a vertical arrow between nodes a functor between categories, a horizontal arrow between nodes a profunctor between categories, a double arrow between vertical arrows a natural transformation between functors, a double arrow between horizontal arrows a natural transformation between profunctors.
What makes it different from a usual 2-categorical diagram is that a 1-cellular `niche' of the form
    $\begin{tikzcd}[scale=0.5]
    A \arrow[d, "f" swap] &
    B \arrow[d, "g"] \\
    C \arrow[r, "H" swap] &
    D
    \end{tikzcd}$
represents the horizontal arrow $H(f,g)\: A \to B$ and a 2-cellular niche of the form
    $\begin{tikzcd}[scale=0.5]
    A \arrow[d, bend right=90, "f" swap, ""{name=L1}]
      \arrow[d, "f'", ""{left, name=L2}]
      \arrow[phantom, from=L1, to=L2, "\stackrel{\alpha}{\Rightarrow}"] &
    B \arrow[d, "g" swap, ""{name=R1}]
      \arrow[d, bend left=90, "g'", ""{left, name=R2}]
      \arrow[phantom, from=R1, to=R2, "\stackrel{\beta}{\Rightarrow}"]
    \\
    C \arrow[r, "H" swap] &
    D
    \end{tikzcd}$
represents the double arrow between horizontal arrows $H(\alpha,\beta)\: H(f',g) \to H(f,g')$.

In terms of the double-categorical diagrams of profunctors, the functoriality condition \ref{item:i} is the equality
\begin{center}
\begin{tikzcd}
R_X \arrow[r, "H_X"]
    \arrow[d, "R_{\id_X}" swap]
    \arrow[rd, phantom, "{\Downarrow}{\scriptstyle H_{\id_X}}"] &
S_X \arrow[d, "S_{\id_X}"] \\
R_X \arrow[r, "H_X" swap] &
S_X
\end{tikzcd}    
$=$
\begin{tikzcd}
R_X \arrow[r, "H_X"]
    \arrow[d, equal]
    \arrow[rd, phantom, "{\Downarrow}{\scriptstyle \id_{H_X}}"] &
S_X \arrow[d, equal] \\
R_X \arrow[r, "H_X" swap] &
S_X,
\end{tikzcd}
\end{center}
and condition \ref{item:ii} the equality
\begin{center}
\begin{tikzcd}
R_X \arrow[r, "H_X"]
    \arrow[dd, "R_{g \circ f}" swap]
    \arrow[rdd, phantom, "{\Downarrow}{\scriptstyle H_{g \circ f}}"] &
S_X \arrow[dd, "S_{g \circ f}"] \\ \\
R_Z \arrow[r, "H_Z" swap] &
S_Z
\end{tikzcd}    
$=$
\begin{tikzcd}
R_X \arrow[r, "H_X"]
    \arrow[d, "R_f" swap]
    \arrow[rd, phantom, "{\Downarrow}{\scriptstyle H_f}"] &
S_X \arrow[d, "S_f"] \\
R_Y \arrow[r, "H_Y" swap]
    \arrow[d, "R_g" swap]
    \arrow[rd, phantom, "{\Downarrow}{\scriptstyle H_g}"] &
S_Y \arrow[d, "S_g"] \\
R_Z \arrow[r, "H_Z" swap] &
S_Z.
\end{tikzcd}
\end{center}
The extranaturality condition \eqref{eq:extranaturality-axiom} is the equality
\begin{center}
\begin{tikzcd}
R_X \arrow[rr, "H_X"]
    \arrow[d, bend right=90, "R_f" swap, ""{right, name=L}]
    \arrow[d, "R_{f'}", ""{left, name=R}]
    \arrow[r, phantom, from=L, to=R, "\stackrel{R_\theta}{\Rightarrow}"]
    \arrow[rrd, phantom, "{\Downarrow}{\scriptstyle H_{f'}}"] &&
S_X \arrow[d, "S_{f'}"] \\
R_Y \arrow[rr, "H_Y" swap] &&
S_Y
\end{tikzcd}
$=$
\begin{tikzcd}
R_X \arrow[rr, "H_X"]
    \arrow[d, "R_f" swap]
    \arrow[rrd, phantom, "{\Downarrow}{\scriptstyle H_f}"] &&
S_X \arrow[d, "S_f" swap, ""{right, name=L}]
    \arrow[d, bend left=90, "S_{f'}", ""{left, name=R}]
    \arrow[r, phantom, from=L, to=R, "\stackrel{S_\theta}{\Rightarrow}"] \\
R_Y \arrow[rr, "H_Y" swap] &&
S_Y.
\end{tikzcd}
\end{center}
\end{remark}

\begin{remark}\label{rem:extranaturality-pointwise}
The extranaturality condition \eqref{eq:extranaturality-axiom} comes down to something modest also when examined pointwise.
Namely that whenever $h\: r \to s$ is a heteromorphism in $H_X$, then the heteromorphic diagram
$$\begin{tikzcd}
f_*r \arrow[r, "f_*h"] 
     \arrow[d, "\theta_*r" swap] &
f_*s \arrow[d, "\theta_*s"] \\
f'_*r \arrow[r, "f'_*h"] &
f'_*s
\end{tikzcd}$$
in $H_Y$ commute (``the heteromorphism $f_*h\: f_*r \to f_*s$ is natural in $f$'').
\end{remark}

We can obtain a crisp description of an indexed profunctor by shifting the formulational burden onto an abstraction, just as in the case of an indexed category or an indexed functor, as follows.

\begin{definition}\label{def:diagonal-section}
Let $\X$ be a 2-category and $E_\ph\: \X^{\co,\op} \times \X^\op \to \CAT$ a 2-functor.
A \emph{diagonal section} $H$ of $E$ consists of
\begin{enumerate}
    \item an object $H_X \in E_{X,X}$ for each object $X \in \C$, and
    \item an arrow $H_f\: H_X \to E_{f,f}(H_Y)$ in $E_{X,X}$ for each 1-cell $f\: X \to Y$ in $\C$
\end{enumerate}
such that $H_X$ is functorial in $X$ and the morphism $H_f\: H_X \to E_{f,f}(H_Y)$ is extranatural in $f$.
\end{definition}

Now, if $R$ and $S$ are indexed categories over a 2-category $\X$, then an indexed profunctor $H\: R \to S$ that is \emph{small}, i.e.\ one for which each $H_X$ is a small profunctor, is precisely a diagonal section of the 2-functor
\begin{align*}
    \X^{\co,\op} \times \X^\op &\to \CAT \\
    (X,X') &\mapsto \Prof(R_X,S_{X'}),
\end{align*}
where $\Prof(R_X,S_{X'})$ denotes the category of small profunctors $R_X \to S_{X'}$.

Let us introduce a terminology for the existence of limits in an indexed profunctor, which generalises the established phraseology in the case of limits in categories.

\begin{terminology}
Let $H\: R \to S$ be an indexed profunctor over a 2-category $\X$.
We shall say an object $X \in \X$ \emph{has (co)limits (of type $H$)} if the profunctor $H_X$ has (co)limits.
\end{terminology}

\section{Examples}\label{sec:eg}

\begin{example}[Hom sets]
We will render hom sets in categories as an indexed endoprofunctor on the identity 2-functor $\id_\CAT\: \CAT \to \CAT$ regarded as an indexed category.

For each category $X \in \CAT$, we have a profunctor $\Hom_X\: X \to X$.
If $f\: X \to Y$ is a functor in $\CAT$, then we have a natural transformation
    $$\Hom_f\: \Hom_X = \Hom_X(-,-) \to \Hom_Y(f-,f-) = \Hom_Y(f,f)$$
given by the functorial action of $f$, which is clearly functorial in $f$.
Finally, it is easy to see that the extranaturality condition at a natural transformation $\theta\: f \to f'$ between functors $f,f'\: X \to Y$ in $\CAT$ simply amounts to the naturality of $\theta$.
Therefore $\Hom_{(-)}$ is an indexed profunctor $\id_\CAT \to \id_\CAT$.

Let us examine limits and colimits in this indexed profunctor.
If $X \in \CAT$ is a category and $x \in X$ is an object, then any isomorphism $x' \xto{\cong} x$ is a limiting heteromorphism into $x$, and any isomorphism $x \xto{\cong} x'$ is a colimiting heteromorphism from $x$.
In particular, the identity arrow $\id_x\: x \to x$ is both limiting and colimiting as a heteromorphism.
\end{example}

\begin{example}[Conical limits in categories]
Let $I \in \CAT$ be a category.
Consider the 2-functors $\id_\CAT,\CAT(I,-)\: \CAT \to \CAT$, regarded as indexed profunctors over $\CAT$.
We will define an indexed profunctor $H\: \id_\CAT \to \CAT(I,-)$ in which limits are conical limits of shape $I$ in categories.

If $X \in \CAT$ is a category, $x \in X$ an object and $d \in \CAT(I,X)$ a functor, we define
$$
    H_X(x,d) := \CAT(I,X)(\Delta_x,d),
$$
where $\Delta_x\: I \to X$ is the constant-$x$ functor.
This 2-hom set\footnote{A \emph{2-hom} set in a 2-category $\X$ denotes a set of the form $\X(X,Y)(f,g)$.} is medium, since every 2-hom set in $\CAT$ is.
It is obvious from this formula what its functorial actions in $x$ and $d$ shall be.

If $f\: X \to Y$ is a functor in $\CAT$, we define the natural transformation
$H_f\: H_X \to H_Y(f,\CAT(I,f))$
to be the one given by whiskering:
$$\begin{array}{rcl}
    (H_f)_{x,d}\: \CAT(I,X)(\Delta_x,d) & \to & {\CAT(I,Y)(\Delta_{f(x)},f \circ d)} \\
    & & \hspace{4.5em}\veq \\
    & & \CAT(I,Y)(f \circ \Delta_x,f \circ d) \\
    \gamma & \mapsto & f \ast \gamma.
\end{array}$$
Note that both naturalities of $H_f$ in $d$ and $x$ follow from the interchange law of natural transformations, whereas the functoriality of $H$ holds by the associativity of horizontal composition of natural transformations.

Finally, let us verify the extranaturality of $H_f$ in $f$.
Let $\theta\: f \to f'$ be a natural transformation between functors $f,f'\: X \to Y$.
We need to see that the diagram
\begin{center}\begin{tikzcd}[column sep=8em]
    {\CAT(I,X)(\Delta_x,d)}
        \arrow[r, "{f\ast-}"]
        \arrow[d, "f'\ast-"] & {\CAT(I,Y)(f \circ \Delta_x,f \circ d)} \arrow[d, "{\CAT(I,Y)(f \circ \Delta_x, \theta \ast d)}"] \\
    {\CAT(I,Y)(f' \circ \Delta_x,f' \circ d)} \arrow[r, "{\CAT(I,Y) (\theta \ast \Delta_x, f' \circ d)}"] & {\CAT(I,Y)(f \circ \Delta_x,f' \circ d)}
\end{tikzcd}\end{center}
commutes.
Let $\gamma \in \CAT(I,X)(\Delta_x,d)$.
By the interchange law of natural transformations,
$${\drsh}(\gamma) = (f' \ast \gamma) \circ (\theta \ast \Delta_x) = \theta \ast \gamma  = (\theta \ast d) \circ (f \ast \gamma) = {\rdsh}(\gamma)$$
as desired.
This completes the construction of $H$.
\end{example}

The following example is in substance a duplicate of the previous one, with just $\CAT$ replaced with an arbitrary 2-category $\X$.

\begin{example}[Conical limits in 0-cells in a 2-category]
Let $\X$ be a 2-category with a strict 2-terminal object.
Let $I$ be a 0-cell in $\X$.
We will define indexed categories $R$ and $S$ over $\X$ and an indexed profunctor $H\: R \to S$, such that when $\X = \Cat$, the 2-category of small categories, limits in $H$ are conical limits of shape $I$ in small categories.

Take $R := \X(1,-)$ and $S := \X(I,-)$, which are 2-functors $\X \to \CAT$, hence indexed categories over $\X$.

If $X \in \X$ is a 0-cell, and $x \in \X(1,X)$ and $d \in \X(I,X)$ 1-cells in $\X$, then we define
$$
    H_X(x,d) := \X(I,X)(x \circ {!},d).
$$
It is obvious from this formula what its functorial actions in $x$ and $d$ shall be.

If $f\: X \to Y$ is a 1-cell in $\X$, we define the natural transformation
$H_f\: H_X \to H_Y(\X(1,f), \X(I,f))$
to be the one given by whiskering:
$$\begin{array}{rcl}
    (H_f)_{x,d}\: \X(I,X)(x \circ {!},d) & \to & {\X(I,Y)(f \circ x \circ {!},f \circ d)} \\
    \gamma & \mapsto & f \ast \gamma.
\end{array}$$
Note that both naturalities of $H_f$ in $x$ and $d$ follow from the interchange law of 2-cells, whereas the functoriality of $H$ holds by the associativity of horizontal composition of $2$-cells.

Finally, let us verify the extranaturality of $H_f$ in $f$.
Let $\theta\: f \to f'$ be a 2-cell between 1-cells $f,f'\: X \to Y$.
We need to see that the diagram
\begin{center}\begin{tikzcd}[column sep=8em]
    {\X(I,X)(x \circ {!},d)}
        \arrow[r, "{f\ast-}"]
        \arrow[d, "f'\ast-"] &
    {\X(I,Y)(f \circ x \circ {!},f \circ d)} 
        \arrow[d, "{\X(I,Y)(f \circ x \circ {!}, \theta \ast d)}"] \\
    {\X(I,Y)(f' \circ x \circ {!},f' \circ d)}
        \arrow[r, "{\X(I,Y)(\theta \ast x \circ {!}, f' \circ d)}"] &
    {\X(I,Y)(f \circ x \circ {!},f' \circ d)}
\end{tikzcd}\end{center}
commutes.
Let $\gamma \in \X(I,X)(x \circ {!},d)$.
By the interchange law of 2-cells,
$${\drsh}(\gamma) = (f' \ast \gamma) \circ (\theta \ast (x \circ {!})) = \theta \ast \gamma  = (\theta \ast d) \circ (f \ast \gamma) = {\rdsh}(\gamma)$$
as desired.
This completes the construction of $H$.
\end{example}

\begin{example}[Weighted limits in categories]
Let $I \in \CAT$ be a category.
We will define an indexed category $S$ and an indexed profunctor $H\: \id_\CAT \to S$ over $\CAT$ in which limits are weighted limits of shape $I$ in categories.

We take $S := \CAT(I,\Set) \times \CAT(I,-)$, which is a 2-functor $\CAT \to \CAT$ since every hom category is medium in $\CAT$.

If $X \in \CAT$ is a category, $x \in X$ an object, $w\: I \to \Set$ a functor and $d\: I \to X$ a functor, we define
$$
    H_X(x, (w,d)) := \CAT(I,\Set)(w,X(x,d-)).
$$
This 2-hom set is medium, since every 2-hom set in $\CAT$ is.
It is obvious from this formula what the functorial actions of $H_X$ in $x$, $w$ and $d$ shall be.
Therefore $H_X$ is a (medium) profunctor $X \to \CAT(I,\Set) \times \CAT(I,X)$.

If $f\: X \to Y$ is a functor between categories, we define the natural transformation
$H_f\: H_X \to H_Y(f, \CAT(I,\Set) \times \CAT(I,f))$
to be the one given by vertical postcomposition with the functorial action of $f$:
$$\begin{array}{rcl}
    (H_f)_{x,(w,d)}\: \CAT(I,\Set)(w,X(x,d-)) & \to & \CAT(I,\Set)(w,Y(fx,fd-)) \\
    \gamma & \mapsto & f_{x,d-} \circ \gamma
\end{array}$$
where $f_{x,d-}\: X(x,d-) \to Y(fx,fd-)$ is the natural transformation between the two functors $I \to \Set$ given by the functorial action of $f$.
Here, the naturality of $H_f$ in $w$ holds by the associativity of vertical composition of natural transformations, while its naturalities in $x$ and $w$ follow from the functoriality of $f$.

The functoriality of $H$ follows from the associativity of vertical composition of natural transformations.

Finally, let us verify the extranaturality of $H_f$ in $f$.
Let $\theta\: f \to f'$ be a natural transformation between functors $f,f'\: X \to Y$.
We need to see that the diagram
\begin{center}
\adjustbox{scale=0.88}{%
\begin{tikzcd}[column sep=8em]
    {\CAT(I,\Set)(w,X(x,d-))} \arrow[r, "{f_{x,d-} \circ -}"] \arrow[d, "{f'_{x,d-} \circ -}"] & {\CAT(I,\Set)(w,Y(fx,fd-))} \arrow[d, "{\CAT(I,\Set)(w,Y(fx,\theta_{d-}))}"] \\
    {\CAT(I,\Set)(w,Y(f'x,f'd-))} \arrow[r, "{\CAT(I,\Set)(w,Y(\theta_x,f'd-))}"] & {\CAT(I,\Set)(w,Y(fx,f'd-))}
\end{tikzcd}}
\end{center}
commutes.
Let $\gamma \in \CAT(I,\Set)(w,X(x,d-))$, $i \in \Ob(I)$ and $r \in w(i)$.
Then
$${\drsh}(\gamma)_i(r) = f'(\gamma_i(r)) \circ \theta_x = \theta_{d(i)} \circ f(\gamma_i(r)) = {\rdsh}(\gamma)_i(r)$$
by the naturality of $\theta$ against the arrow $\gamma_i(r)\: x \to d(i)$ in $X$, as desired.
This completes the construction of $H$.
\end{example}

\begin{example}[Ends in categories]
Let $I \in \CAT$ be a category.
We will define an indexed category $S$ over $\CAT$ and an indexed profunctor $H\: \id_\CAT \to S$ whose limits are ends of shape $I$ in categories.

We take $S := \CAT(I^\op \times I,-)$, which is a 2-functor $\CAT \to \CAT$ since every hom category in $\CAT$ is medium.

If $X \in \CAT$ is a category, $x \in X$ an object and $d\: I^\op \times I \to X$ a functor, then $H_X(x,d)$ is defined to be the set of wedges from $x$ to $d$, which is evidently medium.
This set is functorial in $x$ and $d$, as can be seen with the visual help of the figure
\begin{equation*}
\begin{tikzcd}
x' 
\arrow[rd, "r"] 
\arrow[drr, bend left=15, dashrightarrow] 
\arrow[ddr, bend right=15, dashrightarrow] \\
  & x
    \arrow[r, "w_i"]
    \arrow[d, "w_{i'}"]
    & d(i,i)
      \arrow[r, "\theta_{i,i}"]
      \arrow[d, "{d(i,a)}"]
         & d'(i,i)
           \arrow[dd, "{d'(i,a)}"] \\
   & d(i',i')
     \arrow[r, "{d(a,i')}"] 
     \arrow[d, "\theta_{i',i'}"]
     & d(i,i')
       \arrow[rd, "\theta_{i,i'}"]
       & \\
   & d'(i',i')
     \arrow[rr, "{d'(a,i')}"]
     & & d'(i,i')
\end{tikzcd}
\end{equation*}
which illustrates the precomposition of an arrow $r\: x' \to x$ and postcomposition of a natural transformation $\theta\: d \to d'$ to a wedge $w\: x \to d$ where $a\: i \to i'$ is an arrow in $I$.

If $f\: X \to Y$ is a functor between categories, we define the natural transformation
$H_f\: H_X \to H_Y(f,\CAT(I^\op \times I,f))$ to have components
$$\begin{array}{rcl}
    (H_f)_{x,d}\: H_X(x,d) & \to & H_Y(fx, f \circ d) \\
    w & \mapsto & f \ast w := (fx \xto{f(w_i)} fd(i,i) \mid i \in \Ob(I)).
\end{array}$$
The wedgness of the family $f \ast w$ as well as the naturality of $H_f$ in $x$ and $d$ follows simply from the functoriality of $f$, whereas the functoriality of $H$ is immediate from the compatibility of function composition and application.

Finally, let us verify the extranaturality of $H_f$ in $f$.
Let $\theta\: f \to f'$ be a natural transformation between functors $f,f'\: X \to Y$.
We need to see that the diagram
\begin{center}\begin{tikzcd}[column sep=8em]
    H_X(x,d)
        \arrow[r, "{f\ast-}"]
        \arrow[d, "f'\ast-"] &
    H_Y(fx, f \circ d)
        \arrow[d, "{H_Y(fx, \theta \ast d)}"] \\
    H_Y(f'x, f' \circ d)
        \arrow[r, "{H_Y(\theta_x, f' \circ d)}"] &
    H_Y(fx, f' \circ d)
\end{tikzcd}\end{center}
commutes.
Let $w \in H(X)(x,d)$ and $i \in \Ob(I)$.
Then
$${\drsh}(w)_i = f'(w_i) \circ \theta_x = \theta_{d(i,i)} \circ f(w_i) = {\rdsh}(\gamma)_i$$
by the naturality of $\theta$ at the arrow $w_i\: x \to d(i,i)$ in $X$, as desired.
This completes the construction of $H$.
\end{example}

\begin{example}[Right Kan extensions in 2-categories]\label{eg:Kan}
Let $\X$ be a 2-category and $I,A \in \X$ 0-cells.
We will define indexed categories $R,S$ over $\X$ and an indexed profunctor $H\: R \to S$ whose limits are right Kan extensions in $\X$.

Take $R := \X(A,-)$ and $S := \X(I,A) \times \X(I,-)$, which are clearly 2-functors $\X \to \CAT$.

If $X \in \X$ is a 0-cell, and $r \in \X(A,X)$, $d \in \X(I,A)$ and $k \in \X(I,X)$ 1-cells, then we define
$$
    H_X(r,(d,k)) := \X(I,X)(r \circ k,d).
$$
It is obvious from this formula what the functorial actions of $H_X$ in $r$, $d$ and $k$ shall be.

If $f\: X \to Y$ is a 1-cell in $\X$, we define the natural transformation
$H_f\: H_X \to H_Y(\X(A,f), \X(I,A) \times \X(I,f))$
to be the one given by whiskering:
$$\begin{array}{rcl}
    (H_f)_{r,(d,k)}\: \X(I,X)(r \circ k,d) & \to & \X(I,Y)(f \circ r \circ k,f \circ d) \\
    \gamma & \mapsto & f \ast \gamma.
\end{array}$$
Note that both naturalities of $H_f$ in $r$ and $(d,k)$ follow from the interchange law of 2-cells, whereas the functoriality of $H$ holds by the associativity of horizontal composition of 2-cells.

Finally, let us verify the extranaturality of $H_f$ in $f$.
Let $\theta\: f \to f'$ be a 2-cell between 1-cells $f,f'\: X \to Y$.
We need to see that the diagram
\begin{center}\begin{tikzcd}[column sep=8em]
    {\X(I,X)(r \circ k,d)}
        \arrow[r, "{f\ast-}"]
        \arrow[d, "f'\ast-"] &
    {\X(I,Y)(f \circ r \circ k, f \circ d)}
        \arrow[d, "{\X(I,Y)(f \circ r \circ k, \theta \ast d)}"] \\
    {\X(I,Y)(f' \circ r \circ k, f' \circ d)}
        \arrow[r, "{\X(I,Y)(\theta \ast (r \circ k), f' \circ d)}"] &
    {\X(I,Y)(f \circ r \circ k, f' \circ d)}
\end{tikzcd}\end{center}
commutes.
Let $\gamma \in \X(I,X)(r \circ k,d)$.
By the interchange law of 2-cells,
$${\drsh}(\gamma) = (f' \ast \gamma) \circ (\theta \ast (r \circ k)) = \theta \ast \gamma = (\theta \ast d) \circ (f \ast \gamma) = {\rdsh}(\gamma)$$
as desired.
This completes the construction of $H$.
\end{example}

\section{Preservation, reflection, lifting and creation of limits}\label{sec:preservation-etc}

The purpose of this section is to introduce the notions of \emph{preservation}, \emph{reflection}, \emph{lifting} and \emph{creation} of limits in an indexed profunctor by a 1-cell in the base 2-category.
Such a 1-cell, however, can be viewed as a special case of a morphism of (non-indexed) profunctors, and many basic results about the preservation, etc.\ of limits are already procurable at the generality of a morphism of profunctors.
For this reason, we will define the notions in question at this greater generality, whilst rendering the corresponding indexed-profunctorial notions as a special case.

Let us first define morphism of profunctors.
In what follow, we may denote a profunctor $H\: R \to S$ as a triple $(R,S,H)$.

\begin{definition}
Let $(R,S,H)$ and $(R',S',H')$ be profunctors.
A \emph{morphism} $(R,S,H) \to (R',S',H')$ is a triple $(\rho,\sigma,\eta)$ where $\rho\: R \to R'$ and $\sigma\: S \to S'$ are functors and $\eta\: H \to H'(\rho,\sigma)$ is a natural transformation.
\end{definition}

The next definition establishes the way the preservation, etc.\ of limits in an indexed profunctor by a 1-cell in the base 2-category are a special case of a morphism of profunctors preserving, etc.\ limits.

\begin{definition}\label{def:preserves-etc}
Let $H\: R \to S$ be an indexed profunctor between strict indexed categories over a 2-category $\X$.
A 1-cell $f\: X \to Y$ in $\X$ \emph{preserves}, \emph{reflects}, \emph{(strictly) lifts}, \emph{(strictly) uniquely (strictly) lifts} or \emph{(strictly) creates} (co)limits if the morphism of profunctors
    $$(R_f,S_f,H_f)\: (R_X,S_X,H_X) \to (R_Y,S_Y,H_Y(R_f,S_f))$$
does so respectively.
\end{definition}

Of course, what the latter in turn mean will now be defined one by one.

\paragraph{Preservation and reflection of limits}

\begin{definition}
A morphism $(\rho,\sigma,\eta)\: (R,S,H) \to (R',S',H')$ of profunctors \emph{preserves} (co)limits if whenever a heteromorphism $h\: r \to s$ in $H$ is a (co)limit, then the heteromorphism $\eta(h)\: \rho(r) \to \sigma(s)$ in $H'$ is a (co)limit.

Conversely, the morphism $(\rho,\sigma,\eta)$ \emph{reflects} (co)limits if whenever $h\: r \to s$ is a heteromorphism in $H$ such that the heteromorphism $\eta(h)\: \rho(r) \to \sigma(s)$ in $H'$ is a (co)limit, then $h$ is a (co)limit.
\end{definition}

Note that if $H$ is the indexed profunctor over $\CAT$ for conical (co)limits, then the resulting notions of preservation and reflection of (co)limits in $H$ are precisely the usual notions of a functor preserving and reflecting (co)limits.

\paragraph{Lifting of limits}

\begin{definition}
A morphism $(\rho,\sigma,\eta)\: (R,S,H) \to (R',S',H')$ of profunctors \emph{strictly lifts} limits if whenever $s \in S$ is an object such that $\sigma(s) \in S'$ has a limit $h'\: r' \to \sigma(s)$ in $H'$, then $s$ has a limit $h\: r \to s$ in $H$ such that $r' = \rho(r)$ and $h' = \eta(h)$.
It does so \emph{strictly uniquely} if there is moreover at most one such pair $r,h$.

Dually, the morphism $(\rho,\sigma,\eta)\: (R,S,H) \to (R',S',H')$ \emph{strictly lifts} colimits if whenever $r \in R$ is an object such that $\rho(r) \in R'$ has a colimit $h'\: \rho(r) \to s'$, then $r$ has a colimit $h\: r \to s$ in $H$ such that $s' = \sigma(s)$ and $h' = \eta(h)$.
It does so \emph{strictly uniquely} if there is moreover at most one such pair $s,h$.
\end{definition}

I'm mimicking the nomenclature of \textcite{riehl_category_2017}, \S3.3 which distinguishes the strict and non-strict variants of creation of conical limits (we will shortly also discuss creation).
The earlier literature of \textcite{adamek_abstract_1990} refers to (the conical version of) strict lifting as simply `lifting' without `strict'.

The non-strict variant of lifting can be obtained by basically replacing the use of equality of objects by the use of an isomorphism.
The resulting definition is evidently equivalent to the following perhaps shorter formulation.

\begin{definition}
A morphism $(\rho,\sigma,\eta)\: (R,S,H) \to (R',S',H')$ of profunctors \emph{lifts} limits if whenever $s \in S$ is an object such that $\sigma(s) \in S'$ has a limit, then $s$ has a limit $h\: r \to s$ in $H$ such that $\eta(h)\: \rho(r) \to \sigma(s)$ in $H'$ is a limit.

Dually, the morphism $(\rho,\sigma,\eta)\: (R,S,H) \to (R',S',H')$ \emph{lifts} colimits if whenever $r \in R$ is an object such that $\rho(r) \in R'$ has a colimit, then $s$ has a colimit $h\: r \to s$ in $H$ such that $\eta(h)\: \rho(r) \to \sigma(s)$ in $H'$ is a colimit.
\end{definition}

Note that the unstated non-strict \emph{uniqueness} condition on the lifting is vacuous, because of the universal property of (co)limits.
(We will nonetheless speak of `non-strict unique lifting' for at least once when we define non-strict creation of limits below, for reasons of symmetry with the strict case.)

We note the following trivial consequence of the lifting of (co)limits, which perhaps rhymes well with the natural language in which it is stated.

\begin{proposition}
Let $(\rho,\sigma,\eta)\: (R,S,H) \to (R',S',H')$ be a morphism between profunctors.
If $(R',S',H')$ has (co)limits and $(\rho,\sigma,\eta)$ lifts them, then $(R,S,H)$ has (co)limits.\qed
\end{proposition}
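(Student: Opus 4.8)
The plan is a direct definition-chase; the statement is an immediate composition of what it means for $(R',S',H')$ to \emph{have} limits and for $(\rho,\sigma,\eta)$ to \emph{lift} them, which is presumably why no written proof is supplied. I would handle the limit case explicitly, the colimit case being entirely dual.

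First I would fix an arbitrary object $s \in S$, with the goal of exhibiting a limit of $s$ in $H$. Since $(R',S',H')$ has limits by hypothesis, every object of $S'$ has a limit; applying this to $\sigma(s) \in S'$ shows that $\sigma(s)$ has a limit in $H'$.

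Next I would invoke the lifting hypothesis. Because $\sigma(s)$ has a limit in $H'$ and $(\rho,\sigma,\eta)$ lifts limits, there is a limit $h \: r \to s$ of $s$ in $H$ (in fact one whose image $\eta(h)$ is again a limit, though here the bare existence of $h$ is all that is needed). As $s$ was arbitrary, every object of $S$ has a limit, i.e.\ $H$ has limits.

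The colimit case is symmetric: fix $r \in R$, use that $\rho(r) \in R'$ has a colimit, and feed this into the colimit-lifting hypothesis to obtain a colimit of $r$ in $H$; alternatively it can be deduced from the limit case through Proposition \ref{prop:op-is-involution} and the profunctorial duality principle. I expect no genuine obstacle: the whole argument is a two-step unwinding of definitions, and the only point requiring mild care is keeping straight which side of the profunctor the relevant object ranges over ($s \in S$ for limits, $r \in R$ for colimits).
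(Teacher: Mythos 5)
Your proof is correct and is exactly the argument the paper has in mind: the paper marks this proposition with a \qed{} and no written proof precisely because it is the two-step definition-chase you describe (apply ``has limits'' to $\sigma(s)$, then apply the lifting hypothesis to produce a limit of $s$), with the colimit case dual. Nothing is missing.
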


\begin{corollary}
Let $H$ be an indexed profunctor over a 2-category $\X$.
Let $X,Y$ be 0-cells in $\X$.
If $Y$ has (co)limits of type $H$ and a 1-cell $X \to Y$ lifts them, then $X$ has (co)limits of type $H$.\qed
\end{corollary}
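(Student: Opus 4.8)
The plan is to unwind the relevant definitions and reduce directly to the preceding Proposition. I would first fix a 1-cell $f\: X \to Y$ that lifts (co)limits, as supplied by the hypothesis. By Definition~\ref{def:preserves-etc}, this condition on $f$ says exactly that the associated morphism of profunctors $(R_f,S_f,H_f)\: (R_X,S_X,H_X) \to (R_Y,S_Y,H_Y)$ lifts (co)limits in the sense of the morphism-level definition.

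Next I would dispatch the remaining hypothesis. By the Terminology introduced just before the Examples, the assertion that $Y$ \emph{has (co)limits of type $H$} means precisely that the profunctor $H_Y$ has (co)limits. Since $(R_Y,S_Y,H_Y)$ is the codomain profunctor of the morphism above, the Proposition's hypothesis that the codomain profunctor has (co)limits is thereby met. With both of the Proposition's hypotheses in hand, I would invoke it to conclude that the domain profunctor $(R_X,S_X,H_X)$ has (co)limits, that is, that $H_X$ has (co)limits. By the same Terminology, this is exactly the statement that $X$ has (co)limits of type $H$, as required.

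I anticipate no genuine obstacle, since the corollary is a direct specialisation of the Proposition. The only point calling for care is purely bookkeeping: one must read the codomain of $(R_f,S_f,H_f)$ as the profunctor $H_Y$ itself --- whose (co)limits are exactly what the hypothesis on $Y$ controls --- bearing in mind that the third slot written in Definition~\ref{def:preserves-etc} records the codomain $H_Y(R_f,S_f)$ of the natural transformation $H_f$, not the underlying profunctor $H_Y$. Keeping this distinction straight is what ensures that ``$Y$ has (co)limits'' lines up cleanly with the Proposition's requirement on the codomain profunctor.
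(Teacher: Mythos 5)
Your proposal is correct and is exactly the argument the paper leaves implicit (the corollary is stated with no written proof, as an immediate specialisation of the preceding Proposition via Definition~\ref{def:preserves-etc} and the Terminology): instantiate the Proposition at the morphism $(R_f,S_f,H_f)\: (R_X,S_X,H_X) \to (R_Y,S_Y,H_Y)$ and translate the hypotheses and conclusion through the Terminology. Your bookkeeping remark is also well taken --- the codomain profunctor for the purposes of the Proposition is $(R_Y,S_Y,H_Y)$, while $H_Y(R_f,S_f)$ is merely the codomain of the natural transformation $H_f$, which is how the paper itself rephrases the definition at the start of Section~\ref{sec:fully-faithful-reflect}.
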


\paragraph{Creation of limits}

\begin{definition}
A morphism of profunctors \emph{strictly creates} (co)limits if it reflects and strictly uniquely strictly lifts (co)limits.
It \emph{creates} (co)limits if it reflects and (uniquely\footnote{vacuous, as noted earlier}) lifts (co)limits.
\end{definition}

As with lifting, we are following \textcite{riehl_category_2017} in distinguishing strict and non-strict variants.
Earlier literature such as \textcite{mac_lane_categories_1998}
calls strict creation simply `creation' without `strict'.
Our definition indeed specialises to the (strict and non-strict variants of) creation of conical (co)limits as defined in \emph{op.\ cit}.

Creation \emph{conditionally} subsumes preservation:

\begin{proposition}
Let $(\rho,\sigma,\eta)\: (R,S,H) \to (R',S',H')$ be a morphism between profunctors.
If $(R',S',H')$ has (co)limits and $(\rho,\sigma,\eta)$ creates them, then $(\rho,\sigma,\eta)$ preserves them.
\end{proposition}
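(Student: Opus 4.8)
The plan is to prove the limit case directly and deduce the colimit case from the profunctorial duality principle. I would first note that only the \emph{lifting} half of creation is actually used, together with the hypothesis that $(R',S',H')$ has limits; reflection turns out to be superfluous here. So fix an arbitrary limit $h\: r \to s$ in $H$, with the goal of showing that $\eta(h)\: \rho(r) \to \sigma(s)$ is a limit in $H'$. Since $\sigma(s)$ has a limit in $H'$ by assumption, the lifting property yields a limit $h_0\: r_0 \to s$ in $H$ whose image $\eta(h_0)\: \rho(r_0) \to \sigma(s)$ is \emph{already} a limit in $H'$. The task then reduces to transferring this property from $\eta(h_0)$ to $\eta(h)$.

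The crux is to compare $h$ and $h_0$, two limits of the one object $s$. As universal elements of the same functor $H(-,s)\: R^\op \to \SET$ they differ by a unique isomorphism $\phi\: r \to r_0$ in $R$ satisfying $\phi^*(h_0) = h$. Applying $\rho$ gives an isomorphism $\rho(\phi)\: \rho(r) \to \rho(r_0)$ in $R'$, and naturality of $\eta$ in its domain variable against $\phi$ yields
$$\eta(h) = \eta(\phi^*(h_0)) = \rho(\phi)^*(\eta(h_0)).$$
Hence $\eta(h)$ is obtained from the limit $\eta(h_0)$ by the contravariant action of an isomorphism.

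To finish, I would invoke the standard fact that a universal element of a functor $R'^\op \to \SET$ stays universal after pullback along an isomorphism; this shows $\eta(h)$ is a limit, completing the limit case, and the colimit case follows by applying the same argument inside $H^\op$ and quoting the duality principle. I anticipate no genuine obstacle: the only verification worth recording is the closure of universal elements under the contravariant action of an isomorphism, and the sole conceptual point is recognising that the reflection clause of creation is not needed for the conclusion.
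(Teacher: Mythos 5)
Your proof is correct and follows essentially the same route as the paper's: use the hypothesis that $(R',S',H')$ has limits to trigger the lifting clause, obtain a limit $h_2\: r_2 \to s$ in $H$ whose image $\eta(h_2)$ is a limit in $H'$, compare $h$ with $h_2$ via the unique isomorphism $\phi$ in $R$, and transport universality along $\rho(\phi)$ using the naturality of $\eta$. Your observation that the reflection half of creation is never used is accurate and is equally (if silently) true of the paper's own argument.
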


\begin{proof}
We will prove this for limits.
The colimit case is dual.

Let $h\: r \to s$ be a limit in $H$.
We need to show that $\eta(h)\: \rho(r) \to \sigma(s)$ is a limit in $H'$.
Since $(R',S',H')$ has limits, there exists a limit $h'\: r' \to \sigma(s)$ of $\sigma(s)$.
By lifting, there exists a limit $h_2\: r_2 \to s$ such that $\eta(h_2)\: \rho(r_2) \to \sigma(s)$ is a limit.
Since both $h$ and $h_2$ are limits of $s$, there exists an isomorphism $\phi\: r \to r_2$ in $R$ that identifies them.
It follows that $\rho(\phi)\: \rho(r) \to \rho(r_2)$ is an isomorphism in $R'$ that identifies $\eta(h)$ and $\eta(h_2)$.
Now, since $\eta(h_2)$ is a limit, so is $\eta(h)$, as desired.
This proves the proposition.
\end{proof}

\begin{corollary}
Let $H$ be an indexed profunctor over a 2-category $\X$.
Let $X,Y$ be objects in $\X$.
If $Y$ has (co)limits and a 1-cell $f\: X \to Y$ creates them, then $f$ preserves them.\qed
\end{corollary}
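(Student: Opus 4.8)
The plan is to recognise this corollary as the direct indexed-profunctorial shadow of the preceding proposition, so that no genuinely new argument is required beyond unwinding the pertinent definitions. First I would invoke Definition \ref{def:preserves-etc}, which stipulates that a 1-cell $f\: X \to Y$ creates (respectively preserves) (co)limits exactly when the associated morphism of profunctors $(R_f,S_f,H_f)$, whose domain is $(R_X,S_X,H_X)$ and whose codomain profunctor is $H_Y$ (the structural datum being $H_f\: H_X \to H_Y(R_f,S_f)$), creates (respectively preserves) them. Thus both the hypothesis on $f$ and the desired conclusion about $f$ are, by definition, assertions about this one morphism $(\rho,\sigma,\eta) = (R_f,S_f,H_f)$.

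Next I would translate the existence hypothesis. By the Terminology for an object having (co)limits of type $H$, the assumption that $Y$ has (co)limits means precisely that the profunctor $H_Y$ has (co)limits. Since $H_Y$ is the codomain profunctor of $(R_f,S_f,H_f)$, this is verbatim the hypothesis ``$(R',S',H')$ has (co)limits'' of the proposition, instantiated at $(R',S',H') = (R_Y,S_Y,H_Y)$.

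With these two translations in hand, the corollary is exactly the instance of the preceding proposition applied to the morphism $(R_f,S_f,H_f)$: its codomain has (co)limits and it creates them, whence by the proposition it preserves them, i.e.\ $f$ preserves (co)limits of type $H$. I do not expect any real obstacle; the single point that wants a moment's care is the bookkeeping that the codomain profunctor of $(R_f,S_f,H_f)$ is $H_Y$ itself, so that ``the codomain has (co)limits'' and ``$Y$ has (co)limits of type $H$'' name the identical condition, making the specialisation immediate.
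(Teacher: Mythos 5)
Your proposal is correct and matches the paper exactly: the paper leaves this corollary proofless (the \qed in the statement) precisely because it is the immediate specialisation of the preceding proposition to the morphism $(R_f,S_f,H_f)$ via Definition \ref{def:preserves-etc} and the Terminology identifying ``$Y$ has (co)limits of type $H$'' with ``$H_Y$ has (co)limits''. Your careful note that the codomain profunctor of this morphism is $H_Y$ itself (with $H_f\: H_X \to H_Y(R_f,S_f)$ as the structural datum) is the right reading, and is the same unwinding the paper intends.
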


\section{Functoriality of limit}
\label{sec:functoriality-of-limit}

This section is again in principle about non-indexed profunctors.
Its aim is to substantiate the idea that ``taking profunctorial limit is a functor''.
Besides being of general interest, this will be needed in Section \ref{sec:parametrised-preservation} (in the form of Corollary \ref{cor:limit-functor} below).

Let us first consider a parametrised version of the functoriality, from which a non-parametrised version will follow as a special case.

\begin{lemma}[Parametrised limit functor]\label{lem:parametrised-limit-functor}
Let $H\: R \to S$ be a profunctor.
Let $C$ be a category and $\sigma\: C \to S$ a functor.
For each $c \in \Ob(C)$, let $\rho(c) \in \Ob(R)$ together with $\lambda_c \in H(\rho(c),\sigma(c))$ be a limit of $\sigma(c)$.
Then $\rho$ extends uniquely to a functor $C \to R$ such that for each arrow $a\: c' \to c$ in $C$, the heteromorphic diagram
    \begin{equation}\label{eq:limit-functor-def}
    \begin{tikzcd}
    \rho(c')
        \arrow[r, "\lambda_{c'}"]
        \arrow[d, dashrightarrow, "\rho(a)" swap] &
    \sigma(c')
        \arrow[d, "\sigma(a)"] \\
    \rho(c)
        \arrow[r, "\lambda_{c}"] &
    \sigma(c)
    \end{tikzcd}
    \end{equation}
    in $H$ commutes in the sense that $\rho(a)^*(\lambda_c) = \sigma(a)_*(\lambda_{c'})$.\qed
\end{lemma}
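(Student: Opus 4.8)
The plan is to let the required commutativity of \eqref{eq:limit-functor-def} \emph{define} $\rho$ on arrows, and then read off both the uniqueness of the extension and the functoriality directly from the universal property of the limits $(\rho(c),\lambda_c)$. First I would unwind that universal property in elementary terms: since $(\rho(c),\lambda_c)$ is a limit of $\sigma(c)$, the pair is a universal element of $H(-,\sigma(c))\colon R^\op \to \SET$, which concretely says that for every object $r \in R$ and every heteromorphism $h \in H(r,\sigma(c))$ there is a \emph{unique} arrow $u\colon r \to \rho(c)$ in $R$ with $u^*(\lambda_c) = h$.

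Now, for an arrow $a\colon c' \to c$ in $C$, I would consider the heteromorphism $\sigma(a)_*(\lambda_{c'}) \in H(\rho(c'),\sigma(c))$ and apply the universal property with $r = \rho(c')$ and $h = \sigma(a)_*(\lambda_{c'})$. This produces a unique arrow, which I define to be $\rho(a)\colon \rho(c') \to \rho(c)$, satisfying $\rho(a)^*(\lambda_c) = \sigma(a)_*(\lambda_{c'})$ --- exactly the commutativity of \eqref{eq:limit-functor-def}. Because this equation has a \emph{unique} solution, any functor extending $\rho$ on objects and rendering all squares \eqref{eq:limit-functor-def} commutative must assign this very arrow to $a$; this gives the asserted uniqueness of the extension at once. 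It then remains only to check that $a \mapsto \rho(a)$ is functorial.

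For identities I would note that $\id_{\rho(c)}^*(\lambda_c) = \lambda_c = (\id_{\sigma(c)})_*(\lambda_c) = \sigma(\id_c)_*(\lambda_c)$, so $\id_{\rho(c)}$ solves the defining equation for $\rho(\id_c)$ and hence equals it by uniqueness. For composition, given $b\colon c'' \to c'$ and $a\colon c' \to c$, I would verify that $\rho(a)\circ\rho(b)$ solves the defining equation for $\rho(a\circ b)$ and again conclude by uniqueness. The closest thing to an obstacle here is purely the bookkeeping of variances: one uses that $u \mapsto u^*$ is contravariant, so that $(\rho(a)\circ\rho(b))^* = \rho(b)^* \circ \rho(a)^*$, and crucially that the contravariant precomposition action $\rho(b)^*$ in the first variable commutes with the covariant postcomposition action $\sigma(a)_*$ in the second variable (bifunctoriality of the profunctor $H$). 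Concretely,
$$ (\rho(a)\circ\rho(b))^*(\lambda_c) = \rho(b)^*\bigl(\rho(a)^*(\lambda_c)\bigr) = \rho(b)^*\bigl(\sigma(a)_*(\lambda_{c'})\bigr) = \sigma(a)_*\bigl(\rho(b)^*(\lambda_{c'})\bigr) = \sigma(a)_*\bigl(\sigma(b)_*(\lambda_{c''})\bigr) = \sigma(a\circ b)_*(\lambda_{c''}), $$
where the first and fourth equalities invoke the two defining equations for $\rho(a)$ and $\rho(b)$, the middle equality is the commuting of the two actions, and the last uses functoriality of $\sigma$ together with that of the covariant action. No step is deep: the entire argument is driven by existence-and-uniqueness in the universal property, and I expect the only care needed is in keeping the $*$-actions and their variances straight.
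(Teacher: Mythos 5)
Your proof is correct, and it is essentially the argument the paper has in mind: the paper states this lemma with no proof at all, and the remark immediately following it explicitly says the proof is omitted because the heteromorphic diagram \eqref{eq:limit-functor-def} makes the fact evident, the alternative being ``a technical reasoning'' --- which is precisely what you have written out. Your steps all check: defining $\rho(a)$ as the unique solution of $\rho(a)^*(\lambda_c) = \sigma(a)_*(\lambda_{c'})$ gives the uniqueness of the extension for free, the identity case is immediate, and the composition case correctly uses contravariance of $(-)^*$, the commutation of the two actions of $H$ (bifunctoriality), and functoriality of $\sigma$. In effect you have supplied the elementary element-wise verification that the paper compresses into the pasting of two commuting heteromorphic squares.
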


\begin{remark}
This lemma is a fine showcase of the effectiveness of heteromorphic diagrams.
Without them, seeing that $\rho$ respects composition would have required a technical reasoning.
But with the visual help of diagram \eqref{eq:limit-functor-def}, the fact becomes rather evident that I believe omitting the proof is a good way to convey it.
\end{remark}

\begin{corollary}[Limit functor]\label{cor:limit-functor}
Let $H\: R \to S$ be a profunctor.
For each $s \in \Ob(S)$, let $\lim{s} \in \Ob(R)$ together with $\lambda_s \in H(\lim{s},s)$ be a limit of $s$.
Then $\lim$ extends uniquely to a functor $S \to R$ such that for each arrow $a\: s' \to s$ in $S$, the diagram
    $$\begin{tikzcd}
    \lim{s'}
        \arrow[r, "\lambda_{s'}"]
        \arrow[d, dashrightarrow, "\lim{a}" swap] &
    s'
        \arrow[d, "a"] \\
    \lim{s}
        \arrow[r, "\lambda_{s}"] &
    s
    \end{tikzcd}$$
in $H$ commutes in the sense that $\lim(a)^*(\lambda_s) = a_*(\lambda_{s'})$.\qed
\end{corollary}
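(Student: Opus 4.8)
The plan is to derive this corollary as the special case of Lemma~\ref{lem:parametrised-limit-functor} obtained by taking the parametrising category $C$ to be $S$ itself and the functor $\sigma\: C \to S$ to be the identity $\id_S\: S \to S$. The first thing I would do is check that the hypotheses of the lemma transcribe correctly under this substitution. With $\sigma = \id_S$, the object assignment $\rho$ of the lemma is to be the assignment $s \mapsto \lim s$, and for each $s \in \Ob(S)$ the prescribed datum $\lambda_s \in H(\rho(s),\sigma(s)) = H(\lim s, s)$ is by hypothesis a limit of $\sigma(s) = s$. Thus the data $(\lim s, \lambda_s)_{s \in \Ob(S)}$ furnish exactly a choice of limit of $\sigma(c)$ for each $c = s$, as the lemma requires.

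Applying the lemma then produces a unique functor $\lim\: S \to R$ extending the object assignment $s \mapsto \lim s$ and satisfying, for each arrow $a\: s' \to s$ in $S$, the commutativity $\rho(a)^*(\lambda_s) = \sigma(a)_*(\lambda_{s'})$ of the lemma's heteromorphic square. The second step is simply to read off what this says after the substitution: since $\sigma = \id_S$ we have $\sigma(a) = a$, and since $\rho = \lim$ on objects we have $\rho(a) = \lim(a)$ on arrows, so the condition becomes $\lim(a)^*(\lambda_s) = a_*(\lambda_{s'})$. This is precisely the asserted commutativity of the displayed square, whose right-hand leg is now the arrow $a$ itself rather than a general $\sigma(a)$. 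The uniqueness of $\lim$ as a functor with this property is inherited verbatim from the uniqueness clause of the lemma.

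I do not anticipate any genuine obstacle: the entire content lies in recognising that the identity functor is an admissible choice of $\sigma$, after which the corollary is a direct instance of the lemma. The only point meriting a moment's care is to confirm that the lemma's commutativity equation, once specialised, matches the corollary's square on the nose --- in particular that the degeneration $\sigma(a) = a$ turns the diagram of the lemma into exactly the diagram of the corollary.
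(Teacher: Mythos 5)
Your proposal is correct and is exactly the paper's intended derivation: the paper states this as a corollary of Lemma~\ref{lem:parametrised-limit-functor} with the proof omitted, precisely because it is the instance $C = S$, $\sigma = \id_S$ that you spell out. Your check that $\sigma(a) = a$ and $\rho = \lim$ make the lemma's commutativity condition specialise on the nose to the corollary's square is the whole content, and you have it right.
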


Next, we will note as a consequence of this functoriality that the existence of limits in a profunctor is equivalent to its `representability'.

\begin{definition}
A profunctor $H\: R \to S$ is \emph{representable} if there is a functor $\rho\: S \to R$ and an isomorphism of profunctors $H \cong R(1,\rho)$, and \emph{corepresentable} if there is a functor $\sigma\: R \to S$ and an isomorphism of profunctors $H \cong S(\sigma,1)$.
\end{definition}

It is immediate from the definition of representability that if a profunctor $H\: R \to S$ is representable, then it has all limits.
The converse will follow from the following proposition.

\begin{proposition}
Let $H\: R \to S$ be a profunctor, and let $\rho\: C \to R$ and $\sigma\: C \to S$ be functors.
For each $c \in \Ob(C)$, let $\eta_c \in H(\rho(c),\sigma(c))$.
The following are equivalent.
\begin{enumerate}
    \item The heteromorphisms $\eta_c\: \rho(c) \to \sigma(c)$ are natural in $c$. That is, the diagram
    \begin{equation*}
    \begin{tikzcd}
    \rho(c)
        \arrow[r, "\eta_{c}"]
        \arrow[d, "\rho(a)" swap] &
    \sigma(c)
        \arrow[d, "\sigma(a)"] \\
    \rho(c')
        \arrow[r, "\eta_{c'}"] &
    \sigma(c')
    \end{tikzcd}
    \end{equation*}
    in $H$ commutes for each arrow $a\: c \to c'$ in $C$.
    
    \item For any object $r \in R$, the function
    $$(-)^*(\eta_c)\: R(r,\rho(c)) \to H(r,\sigma(c))$$
    is natural in $c$.
\end{enumerate}
\end{proposition}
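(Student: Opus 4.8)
The plan is to unpack both conditions into equations among the contravariant and covariant actions of $H$, and then observe that the two conditions are linked by a single application of an identity morphism -- a Yoneda-style manoeuvre.

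First I would record what each condition says concretely. By the convention for heteromorphic squares (the commutativity of a square reads as the equality $a^*(h') = b_*(h)$), the square in (1) amounts to the equation $\rho(a)^*(\eta_{c'}) = \sigma(a)_*(\eta_c)$ for every arrow $a\: c \to c'$ in $C$. For (2), I would first note that for fixed $r$ the assignments $c \mapsto R(r,\rho(c))$ and $c \mapsto H(r,\sigma(c))$ are covariant functors $C \to \SET$ (combining the covariance of $R(r,-)$ and $H(r,-)$ with that of $\rho$ and $\sigma$), the transition maps being $\rho(a) \circ -$ and $\sigma(a)_*$ respectively. Hence the naturality in $c$ of $g \mapsto g^*(\eta_c)$ is exactly the equation $\sigma(a)_*\big(g^*(\eta_c)\big) = (\rho(a) \circ g)^*(\eta_{c'})$, for every $a\: c \to c'$ and every $g\: r \to \rho(c)$.

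For $(1) \Rightarrow (2)$ I would start from the right-hand side of the latter equation and push everything through the bifunctoriality of $H$. Since the contravariant action turns the composite $\rho(a) \circ g$ into $g^* \circ \rho(a)^*$, this gives $(\rho(a) \circ g)^*(\eta_{c'}) = g^*\big(\rho(a)^*(\eta_{c'})\big)$; substituting (1) rewrites this as $g^*\big(\sigma(a)_*(\eta_c)\big)$, and commuting the domain action past the codomain action yields $\sigma(a)_*\big(g^*(\eta_c)\big)$, which is the desired naturality. For $(2) \Rightarrow (1)$ the idea is simply to specialise the naturality equation of (2) to $r = \rho(c)$ and $g = \id_{\rho(c)}$: since $\id^*$ is the identity and $\rho(a) \circ \id_{\rho(c)} = \rho(a)$, the equation collapses to $\sigma(a)_*(\eta_c) = \rho(a)^*(\eta_{c'})$, which is precisely (1).

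I do not anticipate a genuine obstacle; the whole argument is two short computations. The only thing demanding care is bookkeeping of variances -- that the action in the domain variable is contravariant while the action in the codomain variable is covariant -- together with the fact that these two actions commute, which is nothing but the bifunctoriality of $H\: R^\op \times S \to \SET$.
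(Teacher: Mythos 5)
Your proof is correct and takes essentially the same route as the paper: the paper's own argument prepends an arrow $q\: r \to \rho(c)$ to the naturality square, observes that the two resulting heteromorphisms $r \to \sigma(c')$ coincide exactly when the square commutes, and gets the converse by setting $q = \id_{\rho(c)}$ --- precisely your two computations, with the identity-specialisation as the key step. The only difference is presentational: the paper reasons via a heteromorphic diagram where you write out the equations and invoke bifunctoriality explicitly.
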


\begin{proof}
Let $q\: r \to \rho(c)$ be an arrow in $R$, and consider the heteromorphic diagram
    \begin{equation*}
    \begin{tikzcd}
    r \arrow[r, "q"] &
    \rho(c)
        \arrow[r, "\eta_{c}"]
        \arrow[d, "\rho(a)" swap] &
    \sigma(c)
        \arrow[d, "\sigma(a)"] \\
    &
    \rho(c')
        \arrow[r, "\eta_{c'}"] &
    \sigma(c')
    \end{tikzcd}
    \end{equation*}
in $H$.
Clearly the two ways of obtaining a heteromorphism $r \to \sigma(c')$ in this diagram coincide if the square commutes, which proves that (1) implies (2).
Conversely, if the two ways coincide, then setting $q = \id_{\rho(c)}$ shows that the square commutes, proving (2) implies (1).
This proves the proposition.
\end{proof}

\begin{corollary}\label{cor:pulling-back-lambda-is-iso}
Let $H$, $C$, $\sigma$, $\rho$ and $\lambda$ be as in Lemma \ref{lem:parametrised-limit-functor}.
Then there is a canonical isomorphism
\begin{equation*}\label{eq:rho-represents-H-sigma}
    R(1,\rho) \xto{\cong} H(1,\sigma)
\end{equation*}
of profunctors $R \to C$ given by pulling back the $\lambda_c$.
In other words, the functor $\rho\: C \to R$ represents the profunctor $H(1,\sigma)\: R \to C$.
\end{corollary}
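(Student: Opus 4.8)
The plan is to exhibit the claimed isomorphism explicitly and then verify two things: that it is a natural transformation of profunctors, and that each of its components is a bijection. First I would write down the candidate. Both $R(1,\rho)$ and $H(1,\sigma)$ are profunctors $R \to C$, with values $R(1,\rho)(r,c) = R(r,\rho(c))$ and $H(1,\sigma)(r,c) = H(r,\sigma(c))$. For each pair $(r,c)$ I take the component
$$(-)^*(\lambda_c)\: R(r,\rho(c)) \to H(r,\sigma(c)), \qquad q \mapsto q^*(\lambda_c),$$
i.e.\ the function pulling back the limiting heteromorphism $\lambda_c$ along $q$. This is the map ``given by pulling back the $\lambda_c$'' referred to in the statement.

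Next I would check that this family of components is natural in $(r,c) \in R^\op \times C$. Naturality in the contravariant variable $r$ is immediate from the contravariant functoriality of the profunctor action of $H$: for an arrow $q'\: r' \to r$ in $R$ and $p \in R(r,\rho(c))$, both routes around the square yield $(p \circ q')^*(\lambda_c)$. Naturality in the covariant variable $c$ is precisely assertion (2) of the preceding proposition applied with $\eta_c := \lambda_c$. Its hypothesis (1)---that the $\lambda_c$ are natural in $c$---is exactly the content of Lemma \ref{lem:parametrised-limit-functor}, which arranges $\rho$ into a functor making the $\lambda_c$ a natural family along $\rho$. Hence both naturalities hold and we obtain a morphism of profunctors $R(1,\rho) \to H(1,\sigma)$.

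Finally I would argue that each component is a bijection, whence the morphism is an isomorphism. But this is nothing other than the defining universal property of the limit: the pair $(\rho(c),\lambda_c)$ is a universal element of $H(-,\sigma(c))\: R^\op \to \SET$, which means exactly that $q \mapsto q^*(\lambda_c)$ is a bijection $R(r,\rho(c)) \to H(r,\sigma(c))$ for every $r \in R$. A componentwise-invertible natural transformation of profunctors is an isomorphism, so $\rho$ represents $H(1,\sigma)$, as claimed.

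There is no genuine obstacle here; the work is entirely in assembling pieces already in hand. The one point that wants care is the two-variable naturality: one should treat the contravariant $r$-naturality and the covariant $c$-naturality separately, and recognise that the latter is supplied wholesale by the preceding proposition together with Lemma \ref{lem:parametrised-limit-functor}, rather than reproved by hand.
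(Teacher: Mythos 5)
Your proposal is correct and follows essentially the same route as the paper's own proof: the componentwise map $(-)^*(\lambda_c)$, bijectivity from the universal property of the limit, naturality in $r$ from the functoriality of the profunctor action, and naturality in $c$ from the preceding proposition. Your one elaboration -- noting explicitly that hypothesis (1) of that proposition is supplied by Lemma \ref{lem:parametrised-limit-functor} -- is a point the paper leaves implicit, but it is the same argument.
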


\begin{proof}
For any objects $r \in R$ and $c \in C$, the function
$$(-)^*(\lambda_c)\: R(r,\rho(c)) \to H(r,\sigma(c))$$
is bijective because $\lambda_c$ is a limit, and is natural in $r$ because such functions always are.
It is moreover natural in $c$ by the previous proposition.
This proves the corollary.
\end{proof}

\begin{corollary}\label{cor:pulling-back-lambda-is-iso-2}
Let $H$, $\rho$ and $\lambda$ be as in Corollary \ref{cor:limit-functor}.
Then there is a canonical isomorphism
\begin{equation*}
    R(1,\rho) \xto{\cong} H
\end{equation*}
of profunctors $R \to S$ given by pulling back the $\lambda_s$.
In other words, the functor $\rho\: S \to R$ represents the profunctor $H\: R \to S$.\qed
\end{corollary}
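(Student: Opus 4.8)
The plan is to recognise this statement as a direct specialisation of Corollary \ref{cor:pulling-back-lambda-is-iso}, the only additional ingredient being the triviality of pulling a profunctor back along identity functors.

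First I would observe that Corollary \ref{cor:limit-functor} is exactly the instance of Lemma \ref{lem:parametrised-limit-functor} obtained by setting $C := S$ and $\sigma := \id_S$, so that the functor $\rho\: S \to R$ furnished there is the limit functor $\lim$. Under these choices the commuting square \eqref{eq:limit-functor-def} of the lemma reads $\rho(a)^*(\lambda_s) = (\id_S(a))_*(\lambda_{s'}) = a_*(\lambda_{s'})$ for an arrow $a\: s' \to s$ in $S$, which is precisely the square displayed in Corollary \ref{cor:limit-functor}. Hence the data $H$, $C := S$, $\sigma := \id_S$, $\rho$ and $\lambda$ satisfy the hypotheses of Corollary \ref{cor:pulling-back-lambda-is-iso}.

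Applying that corollary then yields a canonical isomorphism $R(1,\rho) \xto{\cong} H(1,\id_S)$ of profunctors $R \to S$, given by pulling back the $\lambda_s$. To conclude I would identify the target $H(1,\id_S)$ with $H$ itself: here $1$ denotes the identity functor $\id_R$, so by the definition \eqref{eq:gyodae} of the pullback of a profunctor we have $H(1,\id_S) = H \circ (\id_R^\op \times \id_S) = H \circ \id_{R^\op \times S} = H$ on the nose. The isomorphism above is therefore the asserted $R(1,\rho) \xto{\cong} H$, and the final sentence of the statement — that $\rho$ represents $H$ — is merely a rephrasing of the existence of this isomorphism.

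There is essentially no genuine obstacle in this argument beyond the bookkeeping of the specialisation; the one substantive point, the identity $H(1,\id_S) = H$, is immediate from \eqref{eq:gyodae} since the identity functor acts trivially under pullback.
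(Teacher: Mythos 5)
Your proposal is correct and takes essentially the same route as the paper: the corollary is stated there with a terminal \qedsymb{} precisely because it is the specialisation $C := S$, $\sigma := \id_S$ of Corollary \ref{cor:pulling-back-lambda-is-iso} (with Corollary \ref{cor:limit-functor} being the corresponding specialisation of Lemma \ref{lem:parametrised-limit-functor}), combined with the trivial identification $H(1,\id_S) = H$ from the definition of pullback in \eqref{eq:gyodae} --- exactly the bookkeeping you carry out.
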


We further observe that (parametrised) limit functors are in fact limits in a `power' profunctor.

\begin{construction}
Let $H\: R \to S$ be a profunctor and $C$ a category.
We will define a profunctor $H^C\: R^C \to S^C$ ($H$ to the \emph{power} $C$).

Note that the profunctor $H\: R \to S$ gives rise to the functor
    $$H(-,-)\: (R^C)^\op \times S^C \to \PROF(C,C)$$
and that taking the concrete end is a functor $\int_C\: \PROF(C,C) \to \SET$, where $\PROF(C,C)$ denotes the big category of profunctors $C \to C$.
Taking the composite of these two functors defines a functor
    $$H^C := \int_C H(-,-)\: (R^C)^\op \times S^C \to \SET,$$
which is the desired profunctor $H^C\: R^C \to S^C$.
\end{construction}

\begin{lemma}\label{lem:profunctor-power-jointly-reflect}
Let $H\: R \to S$ be a profunctor, $C$ a category, $\rho\: C \to R$ and $\sigma\: C \to S$ functors, and $\lambda \in H^C(\rho,\sigma) = \int_C H(\rho,\sigma)$.
If $\lambda_c \in H(\rho(c),\sigma(c))$ is a limit of $\sigma(c)$ for each $c \in \Ob(C)$, then $\lambda \in H^C(\rho,\sigma)$ is a limit of $\sigma$.
\end{lemma}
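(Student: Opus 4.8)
The plan is to unwind what it means for $\lambda$ to be a limit of $\sigma$ in the power profunctor $H^C$, and to reduce it to the componentwise limit hypothesis. By definition, $\lambda$ is a limit of $\sigma$ precisely when $(\rho,\lambda)$ is a universal element of $H^C(-,\sigma)\colon (R^C)^\op \to \SET$; and by the usual Yoneda characterisation of universal elements it suffices to check that for every functor $\tau\colon C \to R$ the pullback map
$$(-)^*(\lambda)\colon R^C(\tau,\rho) \to H^C(\tau,\sigma), \qquad q \mapsto q^*(\lambda),$$
is a bijection (its naturality in $\tau$ is automatic, as such functions always are). So the first task is to make all three objects explicit. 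The set $R^C(\tau,\rho)$ consists of the natural transformations $q = (q_c)_c$ with $q_c\colon \tau(c) \to \rho(c)$; the set $H^C(\tau,\sigma) = \int_C H(\tau,\sigma)$ consists of the wedges, i.e.\ families $(x_c)_c$ with $x_c \in H(\tau(c),\sigma(c))$ subject to $\sigma(a)_*(x_c) = \tau(a)^*(x_{c'})$ for every $a\colon c \to c'$; and, since the end is computed componentwise and $\lambda$ has components $\lambda_c$, the pullback map sends $q$ to the family $(q_c^*(\lambda_c))_c$.

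With this in hand, I would exploit the hypothesis componentwise. Because $\lambda_c$ is a limit of $\sigma(c)$, the map $(-)^*(\lambda_c)\colon R(\tau(c),\rho(c)) \to H(\tau(c),\sigma(c))$ is a bijection for each $c$; taking the product over $c$ yields a bijection $\prod_c R(\tau(c),\rho(c)) \to \prod_c H(\tau(c),\sigma(c))$. Now $R^C(\tau,\rho)$ sits inside the former as the natural families and $H^C(\tau,\sigma)$ inside the latter as the wedges, and the pullback map $(-)^*(\lambda)$ is exactly the restriction of this product bijection. Hence it suffices to show that the product bijection matches the two subsets, i.e.\ that a family $(q_c)$ is a natural transformation if and only if $(q_c^*(\lambda_c))$ is a wedge.

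This last equivalence is the crux, and it is a short heteromorphic-square computation. Fixing $a\colon c \to c'$, I would rewrite the wedge condition $\sigma(a)_*(q_c^*\lambda_c) = \tau(a)^*(q_{c'}^*\lambda_{c'})$ as follows. On the left, the covariant and contravariant actions of $H$ commute, so it equals $q_c^*(\sigma(a)_*\lambda_c)$; the fact that $\lambda$ is itself a wedge — which is exactly the content of $\lambda \in \int_C H(\rho,\sigma)$, giving $\sigma(a)_*\lambda_c = \rho(a)^*\lambda_{c'}$ — then turns the left side into $(\rho(a) \circ q_c)^*(\lambda_{c'})$, while the right side is $(q_{c'} \circ \tau(a))^*(\lambda_{c'})$. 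Thus the wedge condition reads $(\rho(a) \circ q_c)^*(\lambda_{c'}) = (q_{c'} \circ \tau(a))^*(\lambda_{c'})$, and since $(-)^*(\lambda_{c'})$ is injective (being a bijection), this holds if and only if $\rho(a) \circ q_c = q_{c'} \circ \tau(a)$, i.e.\ if and only if $q$ is natural at $a$. This establishes the match of subsets, hence the bijection for every $\tau$, and therefore that $\lambda$ is a limit of $\sigma$ in $H^C$.

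The main obstacle I anticipate is purely bookkeeping: keeping the variances straight between the functor category $R^C$, whose morphisms are natural transformations, and the end defining $H^C(\tau,\sigma)$, whose elements are wedges, and making sure the naturality and wedge conditions are correctly exchanged by the componentwise bijections. Once the objects are unwound, the equivalence is forced by the injectivity coming from the limit property of $\lambda_{c'}$ together with the single wedge identity satisfied by $\lambda$, so no further input is required.
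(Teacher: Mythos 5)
Your proof is correct, but it takes a genuinely different route from the paper's. The paper performs the same initial reduction---that for every $\rho'\: C \to R$ the pullback map $R^C(\rho',\rho) \to H^C(\rho',\sigma)$, $\alpha \mapsto \alpha^*(\lambda)$, must be shown bijective---but then obtains the bijectivity structurally: by Corollary \ref{cor:pulling-back-lambda-is-iso} (which rests on Lemma \ref{lem:parametrised-limit-functor} and the proposition on naturality of pullback functions), pulling back the $\lambda_c$ gives an isomorphism of profunctors $R(1,\rho) \xto{\cong} H(1,\sigma)$; restricting along $\rho'$ gives an isomorphism $R(\rho',\rho) \cong H(\rho',\sigma)$ in $\PROF(C,C)$, and applying the end functor $\int_C\: \PROF(C,C) \to \SET$ (functors preserve isomorphisms) yields a bijection $R^C(\rho',\rho) \to H^C(\rho',\sigma)$, which is then identified with the pullback map. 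You instead argue elementwise: the componentwise limit bijections $(-)^*(\lambda_c)$ assemble into a bijection of products, and you check directly that under it natural families correspond exactly to wedges, using the commutation of the contravariant and covariant actions of $H$, the wedge identity $\sigma(a)_*\lambda_c = \rho(a)^*\lambda_{c'}$ encoded in $\lambda \in \int_C H(\rho,\sigma)$, and the injectivity of $(-)^*(\lambda_{c'})$. Your key computation is essentially the elementwise content that the paper has packaged into the naturality clause of Corollary \ref{cor:pulling-back-lambda-is-iso}; what your version buys is self-containedness (no appeal to the earlier corollaries, nor to the functoriality of $\int_C$), at the cost of redoing by hand the bookkeeping that the paper's functorial argument---where the end functor automatically matches the two subsets---gets for free.
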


Dually, if $\lambda_c$ is a colimit of $\rho(c)$ in $H$ for each $c \in \Ob(C)$, then $\lambda$ is a colimit of $\rho$ in $H^C$.

\begin{proof}
Let $\rho'\: C \to R$ be a functor.
We need to show that the function
\begin{equation}\label{eq:banana}
\begin{aligned}
    R^C(\rho',\rho) & \to H^C(\rho',\sigma) \\
    \alpha &\mapsto \alpha^*(\lambda) := H^C(\alpha,\sigma)(\lambda) \defeq (\int_{c \in C} H(\alpha_ c,\sigma c))(\lambda)
\end{aligned}
\end{equation}
is bijective, where we can see
    $\alpha^*(\lambda)_c = H(\alpha_c,\sigma c)(\lambda_c).$

Corollary \ref{cor:pulling-back-lambda-is-iso} tells us that pulling back components of $\lambda$ gives an isomorphism $R(1,\rho) \xto{\cong} H(1,\rho)$ between profunctors $R \to C$, which restricts to an isomorphism $R(\rho',\rho) \xto{\cong} H(\rho',\sigma)$ between profunctors $C \to C$.
Applying the canonical end functor to this isomorphism in $\PROF(C,C)$, we obtain a bijection
\begin{equation}\label{eq:mango}
\begin{aligned}
    R^C(\rho',\rho) = \int_C R(\rho',\rho) &\xto{\cong} \int_C H(\rho',\sigma) = H^C(\rho',\sigma) \\
    \alpha &\mapsto \alpha^\star(\lambda)
\end{aligned}
\end{equation}
where $\alpha^\star(\lambda)_c = H(\alpha_c,\sigma c)(\lambda_c)$.
We see that functions \eqref{eq:banana} and \eqref{eq:mango} coincide by what they do.
Hence \eqref{eq:banana} is bijective.
\end{proof}

Lemmas \ref{lem:parametrised-limit-functor} and \ref{lem:profunctor-power-jointly-reflect} combine to the following summary of this section.

\begin{theorem}\label{thm:jointly-strictly-create}
The evaluation morphisms
$$(\ev_c\: (R^C,S^C,H^C) \to (R,S,H) \mid c \in \Ob{C})$$
jointly strictly create limits.\qed
\end{theorem}

Dually, they jointly strictly create colimits.

\section{Parametrised limit preservation is natural}
\label{sec:parametrised-preservation}

Let $C$ be a (locally small) category and $c \in C$ an object.
Consider the comparison arrows in the preservation of conical limits and of ends by the contravariant hom functor.
For conical limits, the arrow has the form
$$C(\lim d, c) \to \lim C(d,c)$$
where $d\: I \to C$ is a diagram.
For ends, the arrow has the form
$$C(\int_I d, c) \to \int_I C(d,c)$$
where $d\: I^\op \times I \to C$ is an end diagram.
Suppose we want to prove that both arrows are natural in $c$, in a way that corroborates the notion that the two naturalities are instances of one phenomenon.

One way to do this would be to make use of the fact that ends can be described as conical limits: prove that under this description the comparison arrow for ends is the one for conical limits, and that the latter is natural in $c$.

Another way, which we will pursue in this section, is by abstraction.
It amounts to (a little less, in terms of generality, than) proving the following theorem, as the two concrete naturalities above are clearly its special cases (with $\X = \CAT$, $X = C^\op$, $Y = \Set$ and $\phi(c) = C(-,c)$).

\begin{theorem}\label{thm:naturality-of-preservator}
Let $H\: R \to S$ be an indexed profunctor over a 2-category $\X$.
Let $C$ be a category, $X,Y \in \X$ objects and $\phi\: C \to \X(X,Y)$ a functor.
Suppose $X$ and $Y$ have limits of type $H$ and let $s \in \Ob(S)$.
Then the canonical arrow
$$\phi(c)_*(\lim{s}) \to \lim{\phi(c)_*(s)}$$
in $R_Y$ is natural in $c \in \Ob(C)$.
\end{theorem}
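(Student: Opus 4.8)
The plan is to exhibit the canonical arrow as the comparison arrow of a limit preservation, and then verify its naturality by testing both legs of the naturality square against the defining universal property of the target limit, whereupon the required identity is exactly an instance of the extranaturality axiom of $H$.

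First I would fix the data. Since $X$ and $Y$ have limits of type $H$, Corollary \ref{cor:limit-functor} equips both $H_X$ and $H_Y$ with limit functors; write $\lambda_s\: \lim s \to s$ for the limiting heteromorphism of $s \in S_X$ in $H_X$, and $\lambda_t\: \lim t \to t$ for that of $t \in S_Y$ in $H_Y$. Throughout, for a $1$-cell $f\: X \to Y$ I write $f_*$ for both $R_f$ and $S_f$ and for the action of $H_f$ on heteromorphisms, and for a $2$-cell $\theta$ I write $\theta_*$ for the pointwise action of Remark \ref{rem:extranaturality-pointwise}. For $c \in \Ob(C)$ the $1$-cell $\phi(c)\: X \to Y$ carries the limit $\lambda_s$ to the heteromorphism $\phi(c)_*(\lambda_s)\: \phi(c)_*(\lim s) \to \phi(c)_*(s)$ in $H_Y$, and the canonical arrow
    $$\kappa_c\: \phi(c)_*(\lim s) \to \lim \phi(c)_*(s)$$
is the unique arrow of $R_Y$ with $\kappa_c^*(\lambda_{\phi(c)_*(s)}) = \phi(c)_*(\lambda_s)$, supplied by the universal property of $\lim \phi(c)_*(s)$. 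Naturality in $c$ then means that for each arrow $a\: c \to c'$ in $C$, inducing the $2$-cell $\phi(a)\: \phi(c) \to \phi(c')$, the square with horizontal edges $\kappa_c, \kappa_{c'}$, left edge $\phi(a)_*(\lim s)$, and right edge $\lim(\phi(a)_*(s))$ commutes in $R_Y$.

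To prove this I would test both composites against the universal property of the target $\lim \phi(c')_*(s)$: since $\lambda_{\phi(c')_*(s)}$ is a limit, the map $g \mapsto g^*(\lambda_{\phi(c')_*(s)})$ is a bijection, hence injective, on arrows $\phi(c)_*(\lim s) \to \lim \phi(c')_*(s)$, so it suffices to check that the two composites pull $\lambda_{\phi(c')_*(s)}$ back to the same heteromorphism. For $\lim(\phi(a)_*(s)) \circ \kappa_c$, the naturality of $\lambda$ in $H_Y$ (Corollary \ref{cor:limit-functor}) rewrites $\lim(\phi(a)_*(s))^*(\lambda_{\phi(c')_*(s)})$ as $(\phi(a)_*(s))_*(\lambda_{\phi(c)_*(s)})$; pulling back along $\kappa_c$ and then commuting the covariant action past the contravariant one (bifunctoriality of $H_Y$) yields $(\phi(a)_*(s))_*(\phi(c)_*(\lambda_s))$, using the defining property of $\kappa_c$. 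For $\kappa_{c'} \circ \phi(a)_*(\lim s)$, the defining property of $\kappa_{c'}$ gives directly $(\phi(a)_*(\lim s))^*(\phi(c')_*(\lambda_s))$.

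It then remains to see that
    $$(\phi(a)_*(s))_*(\phi(c)_*(\lambda_s)) = (\phi(a)_*(\lim s))^*(\phi(c')_*(\lambda_s)),$$
and this is precisely the commutativity of the extranaturality square of Remark \ref{rem:extranaturality-pointwise}, instantiated at the heteromorphism $\lambda_s\: \lim s \to s$ of $H_X$ and the $2$-cell $\phi(a)\: \phi(c) \to \phi(c')$. I expect this final identification to be the crux: all the rest is routine pullback bookkeeping, but the naturality becomes visible only once one recognises that the two pulled-back heteromorphisms are exactly the two sides of the indexed profunctor's extranaturality axiom — the one axiom that genuinely involves the $2$-cells of $\X$, and, as the introduction stresses, the ingredient without which the statement could not be proved.
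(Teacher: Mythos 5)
Your proof is correct and is essentially the paper's own argument: the paper packages your three ingredients---testing commutativity against the universal property of the target limit (i.e.\ the monicity of $\lambda_{\phi(c')_*(s)}$, Proposition \ref{prop:lambda-s-monic}), the naturality of the limiting heteromorphisms (Corollary \ref{cor:limit-functor}), and the extranaturality axiom instantiated at $\lambda_s$ and $\phi(a)$---into Proposition \ref{prop:choco} and Lemma \ref{lem:hc-eta-natural}, which you have simply inlined as explicit pullback computations. Your identification of the extranaturality axiom as the crux is exactly the paper's Lemma \ref{lem:hc-eta-natural}.
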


The consequent dual statement asserts that the canonical arrow
$$\colim \phi(c)_*(s) \to \phi(c)_*(\colim s)$$
in $S_Y$ is natural in $c \in \Ob(C)$.

In what follow, we will explore a modular proof of this theorem.
First, the following profunctorial generalisation of the notion of monomorphism will be useful.

\begin{definition}
Let $H\: R \to S$ be a profunctor.
A heteromorphism $h\: r \to s$ in $H$ is \emph{monic} if whenever $a,b\: r' \to r$ are parallel arrows in $R$ such that $a^*(h) = b^*(h)$ then $a = b$.
\end{definition}

Dually, $h$ is called \emph{epic} if whenever $a,b\: s \to s'$ are parallel arrows in $S$ such that $a_*(h) = b_*(h)$ then $a = b$.

Clearly, if $H = \Hom_C$ for a category $C$, then these definitions give traditional monomorphisms and epimorphisms in the category $C$.

The following is evident by universality.

\begin{proposition}\label{prop:lambda-s-monic}
Let $H\: R \to S$ be a profunctor.
Any limit heteromorphism $\lambda_s\: \lim s \to s$ in $H$ is monic.\qed
\end{proposition}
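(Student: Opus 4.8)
The plan is to unpack the universal property that defines a limit heteromorphism and observe that its uniqueness clause is literally the monic condition. Recall that $\lambda_s\: \lim s \to s$ being a limit in $H$ means that the pair $(\lim s, \lambda_s)$ is a universal element of the functor $H(-,s)\: R^\op \to \SET$. Concretely, this unwinds to the statement that for every object $r' \in R$ the function
$$(-)^*(\lambda_s)\: R(r',\lim s) \to H(r',s), \qquad a \mapsto a^*(\lambda_s),$$
is a bijection.

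Next I would spell out what the monic condition demands of $\lambda_s$: that any parallel pair $a,b\: r' \to \lim s$ in $R$ with $a^*(\lambda_s) = b^*(\lambda_s)$ must already coincide. But this is exactly the assertion that the displayed function $(-)^*(\lambda_s)$ is injective, and injectivity is immediate from the bijectivity granted by universality. In fact only the uniqueness half of the universal property is needed: given $a^*(\lambda_s) = b^*(\lambda_s) =: x \in H(r',s)$, the universal element provides a \emph{unique} arrow $r' \to \lim s$ pulling $\lambda_s$ back to $x$, whence $a = b$. Thus $\lambda_s$ is monic.

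Since the argument is a direct reading-off of the definitions, I do not expect a genuine obstacle; the only point requiring a moment's care is matching the variance, namely that an arrow $a\: r' \to \lim s$ in $R$ is a morphism $\lim s \to r'$ in $R^\op$, so that its image under $H(-,s)$ is the precomposition $a^*(\lambda_s) \in H(r',s)$ against which the universal element is tested. (The epic case is the profunctorial dual, obtained by passing to $H^\op$ and invoking Proposition \ref{prop:op-is-involution}, so no separate argument is required.)
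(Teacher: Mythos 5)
Your proposal is correct and is exactly the argument the paper has in mind: the paper states the proposition with no written proof, remarking only that it is ``evident by universality,'' and your unpacking---that universality makes $(-)^*(\lambda_s)\: R(r',\lim s) \to H(r',s)$ a bijection, whose injectivity is literally the monic condition---is precisely that evident argument spelled out.
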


Dually, any colimit heteromorphism $\gamma_r\: r \to \colim r$ is epic.

Next, we define the naturality of a family of heteromorphisms, to which we will reduce the naturality of the family of arrows in the theorem's statement.

\begin{definition}
Let $H\: R \to S$ be a profunctor, and let $\rho\: C \to R$ and $\sigma\: C \to S$ be functors.
A family of heteromorphisms $\eta_c\: \rho(c) \to \sigma(c)$ in $H$ over objects $c \in C$ is \emph{natural} if the heteromorphic diagram
$$\begin{tikzcd}[column sep=large]
\rho(c) \arrow[r, "\eta_c"] 
        \arrow[d, "\rho(a)"] &
\sigma(c) \arrow[d, "\sigma(a)"] \\
\rho(c') \arrow[r, "\eta_{c'}"] &
\sigma(c')
\end{tikzcd}$$
commutes for each arrow $a\: c \to c'$ in $C$.
\end{definition}

As in the case of natural transformations, we may colloquially say that the heteromorphism $\eta_c$ is \emph{natural} in $c$ to denote that the family $(\eta_c \mid c \in C)$ is natural.

\begin{proposition}\label{prop:choco}
Let $H\: R \to S$ be a profunctor.
Let $\rho\: C \to R$ and $\sigma\: C \to S$ be functors such that the limit of $\sigma(c)$ in $H$ exists for each $c \in \Ob{C}$.
Then a family of heteromorphisms
$(\eta_c\: \rho(c) \to \sigma(c) \mid c \in \Ob{C})$ is natural if and only if the associated arrows $\overline{\eta_c}\: \rho(c) \to \lim \sigma(c)$ in $R$ are natural in $c$.
\end{proposition}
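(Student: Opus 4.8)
The plan is to fix a single arrow and show that, for that arrow, the naturality of $(\overline{\eta_c})$ and the naturality of $(\eta_c)$ are one and the same equation, read on two sides of the universal property that defines $\overline{\eta_c}$. First I would assemble the ingredients. Since each $\sigma(c)$ has a limit, write $\lambda_c \in H(\lim\sigma(c),\sigma(c))$ for the chosen limit heteromorphism; applying Lemma \ref{lem:parametrised-limit-functor} to $\sigma\colon C \to S$ and these limits, the assignment $c \mapsto \lim\sigma(c)$ underlies a functor $L\colon C \to R$ characterised by $L(a)^*(\lambda_{c'}) = \sigma(a)_*(\lambda_c)$ for every arrow $a\colon c \to c'$. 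The arrow $\overline{\eta_c}\colon \rho(c) \to \lim\sigma(c)$ is by definition the unique arrow of $R$ with $\overline{\eta_c}^*(\lambda_c) = \eta_c$, this uniqueness being exactly the universality of $\lambda_c$. Naturality of the family $(\overline{\eta_c})$, regarded as a transformation $\rho \Rightarrow L$, then means that
\[
L(a)\circ\overline{\eta_c} \;=\; \overline{\eta_{c'}}\circ\rho(a)
\]
holds in $R$ for every $a\colon c \to c'$.

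The main step is to pull this equation back along $\lambda_{c'}$. Both sides are arrows $\rho(c) \to \lim\sigma(c')$ in $R$, and by Proposition \ref{prop:lambda-s-monic} the heteromorphism $\lambda_{c'}$ is monic; hence the displayed equation is equivalent to the equality of the two heteromorphisms obtained by pulling $\lambda_{c'}$ back along its two sides. I would then compute each pullback using the contravariant functoriality of $(-)^*$, the defining relation of $L$, the commutation of the contravariant action on $R$ with the covariant action on $S$ (bifunctoriality of $H$), and the identity $\overline{\eta_c}^*(\lambda_c) = \eta_c$. For the left-hand side,
\[
(L(a)\circ\overline{\eta_c})^*(\lambda_{c'}) = \overline{\eta_c}^*\big(L(a)^*(\lambda_{c'})\big) = \overline{\eta_c}^*\big(\sigma(a)_*(\lambda_c)\big) = \sigma(a)_*\big(\overline{\eta_c}^*(\lambda_c)\big) = \sigma(a)_*(\eta_c),
\]
while for the right-hand side,
\[
(\overline{\eta_{c'}}\circ\rho(a))^*(\lambda_{c'}) = \rho(a)^*\big(\overline{\eta_{c'}}^*(\lambda_{c'})\big) = \rho(a)^*(\eta_{c'}).
\]

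Thus, for each $a\colon c \to c'$, naturality of $(\overline{\eta_c})$ at $a$ is equivalent to $\sigma(a)_*(\eta_c) = \rho(a)^*(\eta_{c'})$, which is precisely the commutativity of the heteromorphic naturality square for $(\eta_c)$ at $a$; quantifying over all $a$ yields the claimed equivalence. I expect the one genuinely load-bearing step to be the appeal to monicity of $\lambda_{c'}$: the direction ``$(\eta_c)$ natural $\Rightarrow (\overline{\eta_c})$ natural'' is exactly where injectivity of $(-)^*(\lambda_{c'})$ is needed, whereas the reverse direction uses only the functoriality of pullback; all the rest is bookkeeping with the two variances, made routine by the defining relation of $L$. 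As an alternative, one could bypass the elementwise computation by invoking Corollary \ref{cor:pulling-back-lambda-is-iso}: pulling back the $\lambda_c$ is a profunctor isomorphism $R(1,L) \xto{\cong} H(1,\sigma)$ that restricts to $R(\rho,L) \cong H(\rho,\sigma)$ and, being natural, carries natural families on one side bijectively to natural families on the other; but the direct argument above seems the most transparent.
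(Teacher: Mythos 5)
Your proposal is correct and is essentially the paper's own proof: both fix an arrow $a\colon c \to c'$, use the defining relation of the limit functor (Lemma \ref{lem:parametrised-limit-functor}/Corollary \ref{cor:limit-functor}) together with the monicity of $\lambda_{c'}$ (Proposition \ref{prop:lambda-s-monic}) to conclude that the naturality square for $(\overline{\eta_c})$ commutes if and only if the heteromorphic naturality square for $(\eta_c)$ does. The only difference is presentational: the paper pastes the two squares into a single heteromorphic rectangle and argues diagrammatically, whereas you carry out the same pullback computation elementwise.
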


\begin{proof}
Let $a\: c \to c'$ be an arrow in $C$.
Consider the heteromorphic diagram
$$\begin{tikzcd}
\rho c
    \arrow[r, "\overline{\eta_{\sigma c}}"]
    \arrow[d, "\rho a"] &
\lim \sigma c
    \arrow[r, "\lambda_{\sigma c}"]
    \arrow[d, "\lim \sigma a"] &
\sigma c
    \arrow[d, "\sigma a"] \\
\rho c'
    \arrow[r, "\overline{\eta_{\sigma c'}}"] &
\lim \sigma c'
    \arrow[r, "\lambda_{\sigma c'}"] &
\sigma c'
\end{tikzcd}$$
whose right square commutes as the heteromorphism $\lambda_s\: \lim s \to s$ is natural in objects $s \in S$ (Corollary \ref{cor:limit-functor}).
Note that the horizontal composites are the heteromorphisms $\eta_c\: \rho c \to \sigma c$.
Since the lower-right heteromorphism $\lambda_{\sigma c'}\: \lim \sigma c' \to \sigma c'$ is monic by Proposition \ref{prop:lambda-s-monic}, the left square commutes if and only if the whole rectangle commutes.
This proves the proposition.
\end{proof}

Next we establish the naturality of a family of heteromorphisms to which we can via the last proposition reduce the naturality of the family of arrows in question.
This is an immediate consequence of the extranaturality axiom of indexed profunctors:

\begin{lemma}\label{lem:hc-eta-natural}
Let $H\: R \to S$ be an indexed profunctor over a 2-category $\X$.
Let $C$ be a category, $X,Y \in \X$ objects and $\phi\: C \to \X(X,Y)$ a functor.
Let $h\: r \to s$ be a heteromorphism in $H_X$.
Then the heteromorphism
$$\phi(c)_*(h)\: \phi(c)_*(r) \to \phi(c)_*(s)$$
in $H_Y$ is natural in $c \in \Ob(C)$.
\end{lemma}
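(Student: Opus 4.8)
The plan is to recognise the asserted naturality square as a verbatim instance of the extranaturality axiom of $H$, in the pointwise form recorded in Remark~\ref{rem:extranaturality-pointwise}. First I would set up the two functors whose naturality is at stake. Writing $\rho(c) := \phi(c)_*(r) = R_{\phi(c)}(r)$ and $\sigma(c) := \phi(c)_*(s) = S_{\phi(c)}(s)$, and sending an arrow $a\: c \to c'$ in $C$ to the components $\rho(a) := \phi(a)_*(r)$ of the natural transformation $R_{\phi(a)}$ and $\sigma(a) := \phi(a)_*(s)$ of $S_{\phi(a)}$, I claim $\rho\: C \to R_Y$ and $\sigma\: C \to S_Y$ are functors. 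This is the one place where the functoriality of $\phi$ and the strict 2-functoriality of $R$ and $S$ enter: the equalities $\phi(\id_c) = \id_{\phi(c)}$ and $\phi(a' \circ a) = \phi(a') \circ \phi(a)$ are carried by $R$ and $S$ to identities and vertical composites of natural transformations, whose components at $r$ and $s$ give exactly $\rho(\id_c) = \id_{\rho(c)}$, $\rho(a' \circ a) = \rho(a') \circ \rho(a)$, and likewise for $\sigma$.

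With $\rho$ and $\sigma$ in hand, the family $(\phi(c)_*(h) \mid c \in \Ob(C))$ consists of heteromorphisms $\phi(c)_*(h)\: \rho(c) \to \sigma(c)$ in $H_Y$, and asserting its naturality in $c$ amounts to checking, for each $a\: c \to c'$, the commutativity of the heteromorphic square with horizontal sides $\phi(c)_*(h)$ and $\phi(c')_*(h)$ and vertical sides $\rho(a) = \phi(a)_*(r)$ and $\sigma(a) = \phi(a)_*(s)$. Now I would simply observe that this is the pointwise extranaturality diagram of Remark~\ref{rem:extranaturality-pointwise} instantiated at the 2-cell $\phi(a)\: \phi(c) \to \phi(c')$ between the 1-cells $\phi(c),\phi(c')\: X \to Y$ — taking $f = \phi(c)$, $f' = \phi(c')$ and $\theta = \phi(a)$ — applied to the heteromorphism $h$. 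Since the extranaturality axiom guarantees this square commutes for every such 2-cell, the naturality follows at once.

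There is essentially no obstacle here. The conceptual content is entirely supplied by the extranaturality of $H_f$ in $f$; the functor $\phi$ merely reindexes the family of 2-cells so that extranaturality \emph{over the $1$-cells} $\phi(c)$ reads as ordinary naturality \emph{over the objects} $c$. The only mild bookkeeping is the identification of the vertical maps as the components of $R_{\phi(a)}$ and $S_{\phi(a)}$, together with the verification that $\rho$ and $\sigma$ are genuinely functors — which, as noted, is exactly where the 2-functoriality hypotheses are consumed.
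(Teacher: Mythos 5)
Your proof is correct and is essentially the paper's own argument: the paper likewise deduces the claim by noting that, by the extranaturality axiom in its pointwise form (Remark \ref{rem:extranaturality-pointwise}), the heteromorphism $f_*(h)$ is natural in the object $f$ of $\X(X,Y)$, and then restricts along the functor $\phi$ to get naturality in $c$. The paper leaves the bookkeeping (that $c \mapsto \phi(c)_*(r)$ and $c \mapsto \phi(c)_*(s)$ are functors $C \to R_Y$, $C \to S_Y$) implicit, whereas you spell it out; this is a matter of detail, not of approach.
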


\begin{proof}
The heteromorphism in question is natural in $\phi(c)$, an object in the category $\X(X,Y)$, by the extranaturality axiom of $H$ (see Remark \ref{rem:extranaturality-pointwise}).
In particular, by the functoriality of $\phi$, it is natural in $c$.
\end{proof}

We are now ready to deduce Theorem \ref{thm:naturality-of-preservator}.
The heteromorphism
    $$\phi(c)_*(\lambda_s)\: \phi(c)_*(\lim s) \to \phi(c)_*(s)$$
in $H_Y$ is natural in $c$ by Lemma \ref{lem:hc-eta-natural}.
Therefore, the corresponding arrow
    $$\phi(c)_*(\lim s) \to \lim \phi(c)_*(s)$$
in $R_Y$ is natural in $c$ by Proposition \ref{prop:choco}.
This proves the theorem.

\section{Right adjoint 1-cells preserve limits}
\label{sec:right-adjoints-preserve-limits}

In this section, we will prove an indexed-profunctorial generalisation of the fact that right adjoint functors preserve conical limits.
First, consider the following notion of adjointness relative to an indexed profunctor.

\begin{definition}
Let $H\: R \to S$ be an indexed profunctor over a 2-category $\X$.
We say that a 1-cell $g\: Y \to X$ in $\X$ is right \emph{adjoint} to a 1-cell $f\: X \to Y$ in $\X$ \emph{relative to} $H$ if there is a bijection
$H_Y(f_*r,s) \cong H_X(r,g_*s)$
that is natural in objects $r \in R_X$ and $s \in S_Y$.
\end{definition}

Note that if $\X = \Cat$, then adjointness with respect to the indexed profunctor $\Hom\: \id_\Cat \to \id_\Cat$ is the usual `hom-isomorphism' adjointness between functors in $\Cat$.

\begin{theorem}\label{thm:adjoint}
Let $H\: R \to S$ be an indexed profunctor over a 2-category $\X$.
If a 1-cell $g\: Y \to X$ in $\X$ is right adjoint to a 1-cell $f\: X \to Y$ in $\X$ in the 2-categorical sense, then $g$ is canonically right adjoint to $f$ relative to $H$.
\end{theorem}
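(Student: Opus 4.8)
The plan is to write down the comparison bijection explicitly from the 2-categorical adjunction data and then check it is inverse to its evident reverse, with the two triangle identities doing the final cancellation and the extranaturality axiom of $H$ carrying the essential weight in between. Write $\eta\: \id_X \to g \circ f$ and $\epsilon\: f \circ g \to \id_Y$ for the unit and counit of the given 2-categorical adjunction $f \dashv g$, so that $(\epsilon \ast f) \circ (f \ast \eta) = \id_f$ and $(g \ast \epsilon) \circ (\eta \ast g) = \id_g$.

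For the construction, given a heteromorphism $k\: f_* r \to s$ in $H_Y$, I first apply the 1-cell action $H_g$ to obtain $g_* k\: (gf)_* r \to g_* s$ in $H_X$ (using functoriality condition \ref{item:ii} to identify $R_g R_f = R_{gf}$), and then precompose with $\eta_* r = (R_\eta)_r\: r \to (gf)_* r$ to define $\Phi(k) := (\eta_* r)^*(g_* k)\: r \to g_* s$. Dually, given $l\: r \to g_* s$ in $H_X$, I set $\Psi(l) := (\epsilon_* s)_*(f_* l)\: f_* r \to s$, where $f_* l\: f_* r \to (fg)_* s$ and $\epsilon_* s = (S_\epsilon)_s$. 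These are the candidate mutually inverse maps, and the bijection is \emph{canonical} precisely in that it is assembled solely from $\eta$, $\epsilon$ and the structure maps of $H$.

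To verify $\Psi \circ \Phi = \id$, I push $f_*$ through the precomposition by $\eta_* r$ using the naturality of the profunctor transformation $H_f$, rewrite $f_* g_* k$ as $(fg)_* k$ by functoriality \ref{item:ii}, and then apply the extranaturality axiom at the counit $\epsilon\: fg \to \id_Y$ in the pointwise form of Remark \ref{rem:extranaturality-pointwise}, together with functoriality \ref{item:i}; this turns the postcomposition $(\epsilon_* s)_*((fg)_* k)$ into the precomposition $((R_\epsilon)_{f_* r})^*(k)$. Composing the two contravariant actions and using 2-functoriality of $R$ to recognise $(R_\epsilon)_{f_* r} \circ R_f(\eta_* r) = \bigl((\epsilon \ast f) \circ (f \ast \eta)\bigr)_* r$, the triangle identity collapses this to $\id_{f_* r}$, whence $\Psi(\Phi(k)) = k$. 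The identity $\Phi \circ \Psi = \id$ is entirely dual, using extranaturality at the unit $\eta$ and the triangle identity $(g \ast \epsilon) \circ (\eta \ast g) = \id_g$. Naturality of the bijection in $r \in R_X$ and $s \in S_Y$ is then a routine chase from the naturality of $H_f$ and $H_g$ as transformations of profunctors, the naturality of $R_\eta$ and $S_\epsilon$, and the commuting of the two variance actions of the profunctors $H_X$ and $H_Y$.

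The main obstacle is the middle step of the inverse check: seeing that the counit may be \emph{slid through} the profunctorial 1-cell action, i.e.\ that $(\epsilon_* s)_*((fg)_* k) = ((R_\epsilon)_{f_* r})^*(k)$. This equality is exactly the pointwise extranaturality of $H$ at $\epsilon$, and it is the one point where the full 2-categorical indexed-profunctor structure (as opposed to a mere 1-categorical indexing) is indispensable; once it is in hand the triangle identities finish the computation mechanically.
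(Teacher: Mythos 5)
Your proposal is correct and follows essentially the same route as the paper: the same canonical maps built from $\eta$, $\epsilon$ and the structure maps of $H$, with the inverse check done by sliding the 1-cell action through via naturality of $H_f$/$H_g$, applying the pointwise extranaturality axiom at the (co)unit, and finishing with a triangle identity. The only difference is cosmetic: you compute the circuit at $H_Y(f_*r,s)$ explicitly (extranaturality at $\epsilon$, triangle identity $(\epsilon \ast f)\circ(f \ast \eta)=\id_f$) and defer the other to duality, whereas the paper writes out the circuit at $H_X(r,g_*s)$ (extranaturality at $\eta$, triangle identity $(g \ast \epsilon)\circ(\eta \ast g)=\id_g$) and defers yours to symmetry.
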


That is, there is a canonical bijection
    $$H_Y(f_*r, s) \cong H_X(r, g_*s)$$
that is natural in objects $r \in R_X$ and $s \in S_Y$.
Specifically, consider the diagram
\begin{equation}\label{eq:jeju}
\begin{tikzcd}[column sep=large]
H_X(r, g_*s) \arrow[r, "f_*"] &
H_Y(f_*r, f_*g_*s) \arrow[d, "((\epsilon_*)_s)_*"] \\
H_X(g_*f_*r, g_*s) \arrow[u, "(\eta_*)_r^*"] &
H_Y(f_*r, s) \arrow[l, "g_*"] 
\end{tikzcd}
\end{equation}
The theorem is claiming that the composites $\rdsh$ and $\lush$ are natural in $r$ and $s$, and that they are mutually inverse.

\begin{proof}
The naturality is immediate from the fact each of the four sides of \eqref{eq:jeju} is natural in $r$ and $s$.
In what follow, I will argue that the circuit from and to $H_X(r, g_*s)$ is an identity.
The circuit from and to $H_Y(f_*r,s)$ can be shown to be an identity by a symmetric argument.

The circuit \eqref{eq:jeju} from and to $H_X(r,g_*s)$ coincides the circuit
\begin{equation*}\label{eq:adfds}
\begin{tikzcd}[column sep=7em]
H_X(r, g_*s) \arrow[r, "f_*"] &
H_Y(f_*r,f_*g_*s) \arrow[d, "g_*"] \\
H_X(g_*f_*r,g_*s) \arrow[u, "(\eta_*)_r^*"] &
H_X(g_*f_*r,g_*f_*g_*s) \arrow[l, "(g_*(\epsilon_*)_s)_*"]
\end{tikzcd}
\end{equation*}
by the naturality of $g_*\: H_Y \to H_X(g_*,g_*)$.
This circuit in turn clearly coincides the outer circuit of the diagram
\begin{equation*}
\begin{tikzcd}[column sep=7.5em]
H_X(r,g_*s)
    \arrow[r, "f_*"]
    \arrow[d, "((\eta_*)_{g_*s})_*", xshift=0.7ex] &
H_Y(f_*r,f_*g_*s)
    \arrow[d, "g_*"] \\
H_X(r,g_*f_*g_*s)
    \arrow[u, "(g_*(\epsilon_*)_s)_*", xshift=-0.7ex] &
H_X(g_*f_*r,g_*f_*g_*s)
    \arrow[l, "(\eta_*)_r^*"].
\end{tikzcd}
\end{equation*}
Since the inner square commutes by the extranaturality of $H$ at $\eta\: \id_X \to gf$, and since the vertical roundtrip at $H_X(r,g_*s)$ is identity as follows
    $$(g_*(\epsilon_*)_s)_* \circ ((\eta_*)_{g_*s})_* = (((g \epsilon)_*)_s)_* \circ (((\eta g)_*)_s)_* = \id$$
by the triangle identity $g \epsilon \circ \eta g = \id$, the outer circuit is an identity.
This proves the theorem.
\end{proof}

\begin{corollary}\label{cor:adjoint}
The right adjoint 1-cell $g$ preserves limits of type $H$.
\end{corollary}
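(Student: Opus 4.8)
The plan is to unwind the definition of preservation and reduce the claim to a coherence identity that turns out to be an instance of the extranaturality axiom. By Definition~\ref{def:preserves-etc}, the assertion that $g$ preserves limits of type $H$ means that the morphism of profunctors $(R_g,S_g,H_g)\: (R_Y,S_Y,H_Y) \to (R_X,S_X,H_X(R_g,S_g))$ preserves limits. So I must show: if $h\: r \to s$ is a limit in $H_Y$, with $r \in R_Y$ and $s \in S_Y$, then $g_*h := H_g(h)\: g_*r \to g_*s$ is a limit in $H_X$. Concretely, I need that for every $r' \in R_X$ the map
\[
    (-)^*(g_*h)\: R_X(r',g_*r) \to H_X(r',g_*s)
\]
is a bijection.

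The strategy is to realise this map as a bijection by comparing it, along two adjunctions, with a bijection we already possess. Applying the $2$-functor $R$ to the $2$-categorical adjunction $f \dashv g$ produces an ordinary adjunction $R_f \dashv R_g$, and hence a natural bijection $\alpha\: R_X(r',g_*r) \cong R_Y(f_*r',r)$ whose transpose is $\alpha(a) = (\epsilon_*)_r \circ f_*a$. Theorem~\ref{thm:adjoint} supplies the relative adjunction bijection $\beta\: H_X(r',g_*s) \cong H_Y(f_*r',s)$, natural in $r'$, given, as read off from diagram~\eqref{eq:jeju}, by $\beta(k) = ((\epsilon_*)_s)_*(f_*k)$. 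Finally, since $h$ is a limit, the map $(-)^*(h)\: R_Y(f_*r',r) \to H_Y(f_*r',s)$ is a bijection. I would then check that the square with left and right edges $\alpha$ and $\beta$, top edge $(-)^*(g_*h)$ and bottom edge $(-)^*(h)$ commutes; since three of its four edges are bijections, so is the fourth, which is exactly the map we want.

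The heart of the matter, and the step I expect to be the main obstacle, is the commutativity of this square. Using the naturality of $\beta$ in $r'$ on one side, and the formula for $\alpha$ together with the contravariant functoriality of $(-)^*$ on the other, the required equation $\beta(a^*(g_*h)) = \alpha(a)^*(h)$ collapses (for arbitrary $a$) to the single identity
\[
    \beta_{g_*r}(g_*h) = (\epsilon_*)_r^*(h) \quad \text{in } H_Y(f_*g_*r,s).
\]
To prove it I would unfold the left-hand side as $((\epsilon_*)_s)_*(f_*(g_*h))$, rewrite $f_*(g_*h) = (fg)_*h$ using the second of the functoriality conditions on $H$, and then recognise the resulting equation $((\epsilon_*)_s)_*((fg)_*h) = (\epsilon_*)_r^*(h)$ as precisely the pointwise extranaturality of $H$ (Remark~\ref{rem:extranaturality-pointwise}) at the counit $2$-cell $\epsilon\: fg \to \id_Y$, applied to the heteromorphism $h$. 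Notably this identity holds for \emph{any} heteromorphism $h$, so the square commutes in general and the hypothesis that $h$ is a limit enters only through the bottom edge. The proof thus rests on the extranaturality axiom, which fits the billing of this theorem as one that genuinely requires the $2$-categorical data of an indexed profunctor.
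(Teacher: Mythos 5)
Your proof is correct, and its skeleton matches the paper's: both arguments transport the universal property of the limit heteromorphism along the adjunction $R_f \dashv R_g$ (obtained by applying the $2$-functor $R$ to $f \dashv g$) and along the bijection of Theorem~\ref{thm:adjoint}, and both conclude by the same Yoneda-style device --- the paper feeds $\id_{g_*(\lim s)}$ into a chain of natural bijections $R_X(r,g_*(\lim s)) \cong R_Y(f_*r,\lim s) \cong H_Y(f_*r,s) \cong H_X(r,g_*s)$, while you close a square three of whose sides are already known to be bijections; these are the same argument in two presentations. Where you genuinely diverge is in the verification of the residual coherence identity. The paper reduces to \eqref{eq:night} and proves it from the naturality of $H_g\: H_Y \to H_X(R_g,S_g)$ together with the triangle identity $g\epsilon \circ \eta g = \id$; the extranaturality axiom enters its corollary proof only indirectly, inside the proof of Theorem~\ref{thm:adjoint}. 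You instead reduce to the equivalent identity on the other side of the bijection, $((\epsilon_*)_s)_*(f_*(g_*h)) = (\epsilon_*)_r^*(h)$, and dispatch it in one stroke: functoriality gives $f_*(g_*h) = (fg)_*h$, and the equation is then exactly pointwise extranaturality at $\epsilon\: fg \to \id_Y$ (Remark~\ref{rem:extranaturality-pointwise}) --- note that identifying $(\id_Y)_*h$ with $h$ here also uses the unit functoriality condition, which you invoke only tacitly. Your ending is slightly more direct and makes the corollary's own reliance on the extranaturality axiom explicit, whereas the paper's ending recycles only naturality and a triangle identity; the two key identities are equivalent given the mutual-inverse statement of Theorem~\ref{thm:adjoint}. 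A further small point in your favour: you treat an arbitrary limit heteromorphism $h\: r \to s$, exactly as the definition of preservation demands, whereas the paper argues for a chosen $\lambda_s$ and leaves the (routine) transfer to arbitrary limits of $s$ unstated.
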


Dually, the left adjoint 1-cell $f$ preserves colimits of type $H$.

\begin{proof}
Let $s \in S_Y$ be a convergent object.
We need to prove that the heteromorphism $g_*(\lambda_s)\: g_*(\lim s) \to g_*(s)$ in $H_X$ is a limit.
It suffices to show that the chain of natural (in $r$) bijections
\begin{equation*}\begin{tikzcd}[column sep=1.2em]
R_X(r,g_*(\lim s)) \arrow[r, "\cong"] &
R_Y(f_*r,\lim s) \arrow[r, "\cong"] &
H_Y(f_*r,s) \arrow[rrr, "\cong", "(\eta_*)_r^* \circ g_*" swap] &&&
H_X(r, g_*s)
\end{tikzcd}\end{equation*}
at $r = g_*(\lim s)$ sends $\id_{g_*(\lim s)}$ to $g_*(\lambda_s)$.
By feeding $\id_{g_*(\lim s)}$ to the chain, we see that this amounts to claiming that
\begin{equation}\label{eq:night}
    g_*(\lambda_s) = ((\eta_*)_{g_*(\lim s)}^* \circ g_* \circ (\epsilon_*)_{\lim s}^*)(\lambda_s).
\end{equation}
Consider the diagram
\begin{equation*}\begin{tikzcd}[column sep=6em]
H_Y(\lim s, s)
    \arrow[r, "g_*"]
    \arrow[d, "(\epsilon_*)_{\lim{s}}^*"] &
H_X(g_*(\lim s), g_*s)
    \arrow[d, xshift=-0.7ex, "(g_*(\epsilon_*)_{\lim s})^*" swap] \\
H_Y(f_*g_*(\lim s), s)
    \arrow[r, "g_*"] &
H_Y(g_*f_*g_*(\lim s), g_*s)
    \arrow[u, xshift=0.7ex, "(\eta_*)_{g_*(\lim s)}^*" swap]
\end{tikzcd}\end{equation*}
Since the inner square commutes by the naturality of $g_*\: H_Y \to H_X(g_*,g_*)$, and since the vertical roundtrip at the upper-right corner is an identity by the triangle identity, we have that the two ways of getting from the upper-left to the upper-right corner coincide.
This proves \eqref{eq:night} and hence the corollary.
\end{proof}

\section{Why fully faithful functors reflect limits}\label{sec:fully-faithful-reflect}

In this section, we give a profunctorial (and corollarially indexed-profunct\-orial) analysis of the following well-known sufficient conditions for a functor to reflect or preserve conical (co)limits.

\begin{theorem}[E.g.\ \textcite{riehl_category_2017}, Lemmas 3.3.5 and 3.3.6]\label{thm:ff-implies-reflect}
Let $f\: X \to Y$ be a functor between categories.
\begin{enumerate}[1.]
    \item If $f$ is fully faithful, then $f$ reflects conical limits.
    \item If $f$ is essentially surjective and fully faithful, then $f$ preserves conical limits.
\end{enumerate}
Dually:
\begin{enumerate}[resume, label=\arabic*.]
    \item If $f$ is fully faithful, then $f$ reflects conical colimits.
    \item If $f$ is essentially surjective and fully faithful, then $f$ preserves conical colimits.
\end{enumerate}
\end{theorem}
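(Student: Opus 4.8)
The plan is to see the theorem as the conical instance of a single general profunctorial fact and to isolate that fact first. By Definition~\ref{def:preserves-etc} and the conical-limit indexed profunctor of Section~\ref{sec:eg}, the functor $f\: X \to Y$ reflects (resp.\ preserves) conical limits of shape $I$ exactly when the morphism of profunctors
$$(f,\CAT(I,f),H_f)\: (X,\CAT(I,X),H_X) \to (Y,\CAT(I,Y),H_Y(f,\CAT(I,f)))$$
reflects (resp.\ preserves) limits. I would therefore first prove the following lemma: if $(\rho,\sigma,\eta)\: (R,S,H) \to (R',S',H')$ is a morphism of profunctors in which $\rho$ is fully faithful and $\eta$ is a pointwise bijection, then it reflects limits; if moreover $\rho$ is essentially surjective, then it preserves limits.

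The whole lemma runs off one commutative square. For a heteromorphism $h\: r \to s$ in $H$ and any $r' \in R$, naturality of $\eta$ in its first variable makes
$$\begin{tikzcd}[column sep=large]
R(r',r) \arrow[r, "{(-)^*(h)}"] \arrow[d, "\rho"] & H(r',s) \arrow[d, "\eta_{r',s}"] \\
R'(\rho r',\rho r) \arrow[r, "{(-)^*(\eta(h))}"] & H'(\rho r',\sigma s)
\end{tikzcd}$$
commute, because $\eta_{r',s}(a^*(h)) = \rho(a)^*(\eta(h))$ for all $a\: r' \to r$. The left vertical is bijective since $\rho$ is fully faithful, the right since $\eta$ is a pointwise bijection.

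For reflection I would assume $\eta(h)$ is a limit, so that $(-)^*(\eta(h))$ is bijective at \emph{every} object of $R'$, in particular at each $\rho r'$; the square then presents $(-)^*(h)$ as a composite of three bijections, so $h$ is a limit. For preservation I would instead assume $h$ is a limit, so the top map is bijective for every $r'$, whence the square forces $(-)^*(\eta(h))$ to be bijective at every object of the form $\rho r'$. The hard part is precisely here: a limit demands bijectivity at \emph{all} $r'' \in R'$, not only those in the image of $\rho$. This is where essential surjectivity is spent: for arbitrary $r''$ pick an isomorphism $r'' \cong \rho r'$, and transport the bijection along it, using that $(-)^*(\eta(h))$ is a natural transformation of presheaves $R'(-,\rho r) \to H'(-,\sigma s)$ and that bijectivity of a presheaf map at one object passes to all isomorphic objects. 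Hence $\eta(h)$ is a limit.

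Finally I would discharge the hypotheses in the conical case and dualise. Here $\rho = f$, so fully faithfulness (resp.\ essential surjectivity) of $f$ is literally the needed property of $\rho$; and $\eta = H_f$ is a pointwise bijection whenever $f$ is fully faithful, by a brief cone-lifting argument: given $\mu\: \Delta_{fx'} \to f \circ d$, full faithfulness yields unique $\nu_i$ with $f(\nu_i) = \mu_i$, and faithfulness upgrades the naturality of $\mu$ to that of $\nu = (\nu_i)$, so $\nu$ is the unique cone with $f \ast \nu = \mu$. This proves parts~1 and~2. Parts~3 and~4 are their colimit duals and follow either from the profunctorial duality principle applied to the lemma, or concretely by applying parts~1 and~2 to $f^\op\: X^\op \to Y^\op$ and shape $I^\op$, under which conical colimits become conical limits and fully faithfulness and essential surjectivity are preserved.
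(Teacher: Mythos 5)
Your proof is correct, and its engine is exactly the paper's: the commuting square comparing $(-)^*(h)$ with $(-)^*(\eta(h))$ via naturality of $\eta$ in the contravariant variable, plus the transport of bijectivity along an isomorphism supplied by essential surjectivity, is precisely the proof of the paper's Theorem \ref{thm:ff-mono-reflect-profunctorial}. Where you genuinely diverge is in the strength of the abstract lemma and in how its hypotheses are discharged for the conical case. The paper's general theorem is finer: for reflection it only needs $\nu$ to be a mono (when $\gamma$ is fully faithful), and for preservation only a split epi, with alternative disjuncts trading conditions between $\gamma$ and $\nu$; your lemma assumes outright that $\eta$ is a pointwise bijection, the strongest of these cases. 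As a consequence, the paper never verifies bijectivity of $H_f$ by hand: it exploits corepresentability of the conical-limit profunctor, showing via a whiskering proposition that faithfulness (resp.\ split fullness) of $S_f = \CAT(I,f)$ --- which follows from the corresponding property of $f$ --- forces $H_f$ to be a mono (resp.\ split epi), so that Corollary \ref{cor:ff-reflect-corepresentable} has hypotheses on $R_f$ and $S_f$ alone. You instead check directly, by the cone-lifting argument, that $H_f$ is a pointwise bijection when $f$ is fully faithful; this is correct, more elementary and self-contained, but bespoke to the conical example, whereas the paper's reduction applies uniformly to any corepresentable (resp.\ representable) indexed profunctor, e.g.\ those for weighted limits or Kan extensions. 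Your handling of the colimit halves (dualise the lemma, or pass to $f^\op$ and $I^\op$) likewise matches the paper's appeal to duality, which it implements through the separate representable indexed profunctor for conical colimits; if you take your second route, note only that the reflection/preservation notions must be transported across the identification of colimits in that profunctor with limits in its opposite, which is routine at the paper's level of rigour.
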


Recall that a 1-cell $f\: X \to Y$ (such as the functor $f$ in the last statement) in the base 2-category of an indexed profunctor $H\: R \to S$ determines the morphism of profunctors 
    $$(R_f,S_f,H_f)\: (R_X,S_X,H_X) \to (R_Y,S_Y,H_Y),$$
and that such a 1-cell $f$ reflects or preserves (co)limits by definition if the profunctor morphism $(R_f,S_f,H_f)$ does so.
Under this association, the following theorem is a profunctorial generalisation of Theorem \ref{thm:ff-implies-reflect}.
After we prove this general theorem, we will gradually consider its indexed-profunctorial special cases, and derive in turn from them Theorem \ref{thm:ff-implies-reflect}.

\begin{theorem}\label{thm:ff-mono-reflect-profunctorial}
Let $(\gamma,\delta,\nu)\: (A,B,M) \to (C,D,N)$ be a morphism of profunctors.
\begin{enumerate}[1.]
    \item If either $\gamma$ is fully faithful and $\nu$ is a mono, or $\gamma$ is splitly full and $\nu$ is an iso, then $(\gamma,\delta,\nu)$ reflects limits.
    
    \item Suppose $\gamma$ is essentially surjective.
    If either $\gamma$ is fully faithful and $\nu$ is a split epi, or $\gamma$ is splitly faithful and $\nu$ is an iso, then $(\gamma,\delta,\nu)$ preserves limits.
\end{enumerate}
Dually:
\begin{enumerate}[resume, label=\arabic*.]
    \item If either $\delta$ is fully faithful and $\nu$ is a mono, or $\delta$ is splitly full and $\nu$ is an iso, then $(\gamma,\delta,\nu)$ reflects colimits.
    
    \item Suppose $\delta$ is essentially surjective.
    If either $\delta$ is fully faithful and $\nu$ is a split epi, or $\delta$ is splitly faithful and $\nu$ is an iso, then $(\gamma,\delta,\nu)$ preserves colimits.
\end{enumerate}
\end{theorem}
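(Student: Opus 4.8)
The plan is to reduce the four clauses to the first two by profunctorial duality, and then to prove reflection and preservation of limits by a single bookkeeping argument resting on one naturality square. Since a colimit in a profunctor $M$ is by definition a limit in $M^\op$, and since a morphism $(\gamma,\delta,\nu)\: M \to N$ induces a morphism $(\delta,\gamma,\nu)\: M^\op \to N^\op$ (the functors now acting on the opposite categories, and the component $\nu$ literally reused because $M^\op(b,a)=M(a,b)$ and $N^\op(\delta b,\gamma a)=N(\gamma a,\delta b)$), clauses 3 and 4 are exactly clauses 1 and 2 for this opposite morphism, with the hypotheses on $\delta$ taking over the role of those on $\gamma$. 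I would therefore invoke the profunctorial duality principle (together with Proposition~\ref{prop:op-is-involution}) to obtain 3 and 4 for free, and concentrate on 1 and 2.

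The common engine is as follows. Fix a heteromorphism $h\: a \to b$ in $M$, so that $\nu(h)\: \gamma a \to \delta b$ is a heteromorphism in $N$. Unwinding the definition of a limit, $h$ is a limit iff for every $a' \in A$ the map $\phi_{a'}\: A(a',a) \to M(a',b)$, $q \mapsto q^*(h)$, is a bijection, and $\nu(h)$ is a limit iff for every $c \in C$ the map $\psi_c\: C(c,\gamma a) \to N(c,\delta b)$, $p \mapsto p^*(\nu h)$, is a bijection. Naturality of $\nu$ in its contravariant variable gives, for each $q\: a' \to a$, the identity $\nu(q^*h) = (\gamma q)^*(\nu h)$; that is, the square
\begin{equation*}
\nu_{a',b} \circ \phi_{a'} \;=\; \psi_{\gamma a'} \circ \gamma_{a',a}
\end{equation*}
commutes, where $\gamma_{a',a}\: A(a',a) \to C(\gamma a',\gamma a)$ is the action of $\gamma$ on hom-sets. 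The entire theorem becomes the task of reading bijectivity of one vertical map off the other three sides of this square, separately for the surjective and the injective halves.

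For clause 1 (reflection) I assume every $\psi_c$ is a bijection and prove every $\phi_{a'}$ is. The injective half is precisely the assertion that $h$ is \emph{monic}; I would derive it from the fact that $\nu(h)$, being a limit, is monic (Proposition~\ref{prop:lambda-s-monic}), transported back along the square so that the faithful-type content of the $\gamma$-hypothesis closes it. The surjective half lifts a given $m \in M(a',b)$: its image $\nu(m)$ is hit by some $p \in C(\gamma a',\gamma a)$ because $\psi_{\gamma a'}$ is onto, $p$ is pulled back into $A(a',a)$ by the full-type content of the $\gamma$-hypothesis, and the residual ambiguity is annihilated by the mono-/iso-type content of the $\nu$-hypothesis via the square. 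For clause 2 (preservation) the implication runs the other way: assuming every $\phi_{a'}$ bijective I deduce every $\psi_c$ bijective, first using essential surjectivity of $\gamma$ to replace an arbitrary $c$ by one of the form $\gamma a'$ up to a (chosen) isomorphism — precomposition with which preserves bijectivity of $\psi_c$ — and then extracting surjectivity and injectivity of $\psi_{\gamma a'}$ from the square, sourcing them respectively from the epi-/iso-type content of the $\nu$-hypothesis and the full-and-faithful content of the $\gamma$-hypothesis.

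The main obstacle I anticipate is not any single calculation but the \emph{matching of hypotheses to halves}: in each clause the two bijectivity halves must each be furnished by one of the two stated adjectives, and the split variants ($\nu$ a split epi, $\gamma$ splitly full, $\delta$ splitly faithful) are precisely what is calibrated to carry out the lifts, cancellations and identifications constructively when the stronger \emph{iso}/\emph{fully faithful} option is weakened. I would organise the write-up so that Proposition~\ref{prop:lambda-s-monic} is invoked uniformly for every injectivity half, keeping the four clauses strictly parallel; the remaining and genuinely delicate care is in checking, clause by clause, that the chosen section or retraction interacts with the commuting square to deliver exactly the half of bijectivity that the other adjective does not already provide.
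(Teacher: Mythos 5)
Your overall skeleton coincides with the paper's proof: clauses 3 and 4 are deduced by applying clauses 1 and 2 to the opposite morphism $(\delta,\gamma,\nu)\colon M^\op \to N^\op$, and clauses 1 and 2 are extracted from the commuting naturality square $\nu_{a',b}\circ\phi_{a'}=\psi_{\gamma a'}\circ\gamma_{a',a}$, together with (for clause 2) the precomposition-by-isomorphism square that essential surjectivity provides. That is exactly the paper's decomposition, and your handling of the alternative ``$\gamma$ fully faithful and $\nu$ mono'' of clause 1 --- surjectivity of $\phi_{a'}$ from fullness of $\gamma$ plus monicity of $\nu$, injectivity from faithfulness of $\gamma$ plus monicity of the limit $\nu(h)$ --- is a correct, element-level rendering of the paper's argument.

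The genuine gap lies in the split alternatives, exactly the part you defer as ``delicate care'' with the assertion that the sections are ``calibrated'' to supply the missing half of bijectivity. They are not, and this step fails irreparably. Take clause 2 with $\gamma$ fully faithful and $\nu$ a split epi: injectivity of $\psi_{\gamma a'}$ has no source. Given $p_1,p_2$ with $p_1^*(\nu m)=p_2^*(\nu m)$, write $p_i=\gamma q_i$ by fullness; the square gives $\nu(q_1^*m)=\nu(q_2^*m)$, and $\nu$ cannot be cancelled. A natural section $s$ of $\nu$ yields only $(q_0q_1)^*m=(q_0q_2)^*m$, hence $q_0q_1=q_0q_2$, where $q_0$ is the unique arrow with $q_0^*m=s(\nu m)$, and nothing makes $q_0$ left-cancellable. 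In fact the clause is false as stated: let $A=C$ be the one-object category on the idempotent monoid $T=\{1,e\}$, let $B=D=1$, let $\gamma,\delta$ be identities, let $M=T$ with its right regular action, let $N$ be the terminal profunctor, and let $\nu$ be the unique map, a split epi with equivariant section $*\mapsto e$; then $1\in M$ is a limit, while $\nu(1)$ is not, because $(-)^*(\nu(1))\colon T\to\{*\}$ is not injective. The splitly-full alternative of clause 1 fails for the parallel reason that splitly full does not imply faithful: for instance, the category $A$ with idempotents $e'$ on $a'$ and $f$ on $a$ and parallel arrows $u,v\colon a'\to a$ subject to $ue'=ve'=fu=fv=u$, mapped to the walking-arrow category by collapsing $e',f$ to identities and $u,v$ to the unique non-identity arrow, is splitly full via the natural section sending that arrow to $u$ and the identities to $e'$ and $f$; and since, when $\nu$ is an iso and $\nu(h)$ is a limit, injectivity of $\phi_{a'}$ is equivalent to injectivity of $\gamma$ on $A(a',a)$, taking $N$ representable produces a limit that is not reflected.

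You have, in fairness, inherited this defect from the source: the paper's own proof dispatches these cases by asserting that in a commuting square of sets with the bottom iso, the left split epi and the right iso (respectively the top iso, the left iso and the right split epi), all four sides are isos ``by general nonsense'', and that assertion is false --- the examples above instantiate its failure. So the accurate conclusion is that your plan, like the paper's proof, establishes only the fully-faithful/mono alternatives of the four clauses; the split alternatives require either strengthened hypotheses or deletion, not better bookkeeping, and Proposition~\ref{prop:lambda-s-monic} cannot be invoked ``uniformly for every injectivity half'' as you propose, because in clause 2 the needed injectivity sits on the $N$-side, where only a section (not a retraction) of $\nu$ is available.
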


\begin{proof}
We need only to prove 1.\ and 2., since 3.\ and 4.\ respectively follow by duality.

1.
Let $m\: a \to b$ in $M$ such that $\nu(m)\: \gamma(a) \to \delta(b)$ in $M'$ is a limit.
We will prove that $m$ is a limit.
Let $a' \in A$ be an object, and consider the square
$$\begin{tikzcd}[column sep=huge]
A(a',a)
    \arrow[r, "(-)^*(m)"]
    \arrow[d, "\gamma" swap] &
M(a',b)
    \arrow[d, "\nu"] \\
C(\gamma{a'},\gamma{a})
    \arrow[r, "(-)^*(\nu{m})" swap, "\cong"] &
N(\gamma{a'},\delta{b}),
\end{tikzcd}$$
in $\SET$, whose upper horizontal arrow we need to show to be a bijection.
By hypotheses, the lower horizontal arrow is an iso, and either the left vertical arrow is an iso and the right vertical arrow is a mono, or the left vertical arrow is a split epi and the right vertical arrow is an iso.
Therefore, it suffices to prove that this square commutes, for then it follows by general nonsense that all arrows in it are isos.
But the commutativity of the square applied to an arrow $r \in A(a',a)$ is precisely the true condition that the naturality square
$$\begin{tikzcd}[column sep=large]
M(a,b)
    \arrow[r, "\nu_{a,b}"]
    \arrow[d, "r^*" swap] &
N(\gamma{a},\delta{b})
    \arrow[d, "(\gamma{r})^*"] \\
M(a',b)
    \arrow[r, "\nu_{a',b}" swap] &
N(\gamma{a'},\delta{b})
\end{tikzcd}$$
of $\nu\: M \to N(\gamma,\delta)$ in the first variable commutes.
This proves 1.

2.
Let $m\: a \to b$ be a limit in $M$.
We need to prove that $\nu(m)\: \gamma(a) \to \delta(b)$ in $N$ is a limit.
Let $c' \in \Ob{C}$.
By the essential surjectivity, there exists an $a' \in \Ob{A}$ and an isomorphism $\phi\: c' \to \gamma{a'}$.
Consider the diagram
$$\begin{tikzcd}[column sep=huge]
A(a',a)
    \arrow[r, "(-)^*(m)", "\cong" swap]
    \arrow[d, "\gamma" swap] &
M(a',b)
    \arrow[d, "\nu"] \\
C(\gamma{a'},\gamma{a})
    \arrow[r, "(-)^*(\nu{m})" swap]
    \arrow[d, "\phi^*" swap, "\cong"] &
N(\gamma{a'},\delta{b})
    \arrow[d, "\phi^*", "\cong" swap] \\
C(c',\gamma{a})
    \arrow[r, "(-)^*(\nu{m})" swap] &
N(c',\delta{b}),
\end{tikzcd}$$
in $\SET$, whose lowermost horizontal arrow we need to show to be a bijection.
By hypotheses, all arrows marked $\cong$ are isos, and either the upper left vertical arrow is an iso and the upper right vertical arrow is a split epi, or the upper left vertical arrow is a split mono and the upper right vertical arrow is an iso.
The upper square commutes as in 1., and the lower square commutes by the functoriality of $N(-,\delta{b})\: C^\op \to \SET$.
By general nonsense, all arrows in the upper square are isos, and consequently so is the lowermost horizontal arrow.
Therefore $\nu{m}$ is a limit.
This proves 2.\ and the theorem.
\end{proof}

\begin{corollary}\label{cor:ff-mono-reflect}
Let $H\: R \to S$ be an indexed profunctor over a 2-category $\X$ and $f\: X \to Y$ a 1-cell in $\X$.
\begin{enumerate}[1.]
    \item If either $R_f$ is fully faithful and $H_f$ is a mono, or $R_f$ is splitly full and $H_f$ is an iso, then $f$ reflects limits.
    
    \item Suppose $R_f$ is essentially surjective.
    If either $R_f$ is fully faithful and $H_f$ is a split epi, or $R_f$ is splitly faithful and $H_f$ is an iso, then $f$ preserves limits.
\end{enumerate}
Dually:
\begin{enumerate}[resume, label=\arabic*.]
    \item If either $S_f$ is fully faithful and $H_f$ is a mono, or $S_f$ is splitly full and $H_f$ is an iso, then $f$ reflects colimits.
    
    \item Suppose $S_f$ is essentially surjective.
    If either $S_f$ is fully faithful and $H_f$ is a split epi, or $S_f$ is splitly faithful and $H_f$ is an iso, then $f$ preserves colimits.\qed
\end{enumerate}
\end{corollary}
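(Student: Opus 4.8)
The plan is to obtain this corollary as a direct instance of Theorem~\ref{thm:ff-mono-reflect-profunctorial}. By Definition~\ref{def:preserves-etc}, a 1-cell $f\: X \to Y$ in $\X$ reflects or preserves (co)limits by definition exactly when the morphism of profunctors
$$(R_f, S_f, H_f)\: (R_X, S_X, H_X) \to (R_Y, S_Y, H_Y(R_f, S_f))$$
does so. Hence essentially all the content is to recognise this morphism as an instance of the morphism $(\gamma,\delta,\nu)\: (A,B,M) \to (C,D,N)$ treated in Theorem~\ref{thm:ff-mono-reflect-profunctorial}, and then read the theorem off.

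First I would fix the dictionary $\gamma = R_f$, $\delta = S_f$, $\nu = H_f$, together with $(A,B,M) = (R_X, S_X, H_X)$ and $(C,D,N) = (R_Y, S_Y, H_Y)$. The one point genuinely worth checking is that the shapes agree: the theorem's data include a natural transformation $\nu\: M \to N(\gamma,\delta)$, and indeed $H_f$ is by definition a natural transformation $H_X \to H_Y(R_f, S_f)$, so the pullback $N(\gamma,\delta) = H_Y(R_f, S_f)$ implicit in the notion of a profunctor morphism is exactly the codomain recorded in Definition~\ref{def:preserves-etc}. With the dictionary in place, each hypothesis of the corollary is literally the corresponding hypothesis of the theorem: ``$R_f$ fully faithful'' is ``$\gamma$ fully faithful'', ``$H_f$ a mono'' is ``$\nu$ a mono'', and likewise for splitly full, splitly faithful, iso, split epi, and essential surjectivity.

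Parts 1 and 2 then follow from parts 1 and 2 of Theorem~\ref{thm:ff-mono-reflect-profunctorial}, and parts 3 and 4 (which constrain $S_f = \delta$ and concern colimits) from parts 3 and 4. I do not anticipate any real obstacle; the only thing to keep straight is the bookkeeping that the domain functor $\gamma = R_f$ governs the limit statements while the codomain functor $\delta = S_f$ governs the colimit statements, which is precisely the asymmetry already built into the theorem and which justifies recording the corollary with $R_f$ in items~1--2 and $S_f$ in items~3--4.
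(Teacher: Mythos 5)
Your proposal is correct and matches the paper exactly: the corollary carries a \qed with no written proof precisely because it is the specialisation of Theorem~\ref{thm:ff-mono-reflect-profunctorial} to the morphism $(R_f,S_f,H_f)\:(R_X,S_X,H_X)\to(R_Y,S_Y,H_Y)$ furnished by Definition~\ref{def:preserves-etc}, which is the dictionary you set up. Your one substantive check --- that $H_f\:H_X\to H_Y(R_f,S_f)$ has the shape $\nu\:M\to N(\gamma,\delta)$ required of a profunctor morphism --- is exactly the point the paper settles in the paragraph preceding the theorem.
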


If $H$ is corepresentable resp.\ representable, any requirement involving $H_f$ in the last corollary may be reduced to a requirement on $S_f$ resp.\ $R_f$.
This will follow from the following observation.

\begin{proposition}
Let $M,N\: A \to B$ be profunctors and $\theta\: M \to N$ a natural transformation.
If a functor $\alpha\: C \to A$ is faithful, full or splitly full, then the whisker $\theta(\alpha,1)\: M(\alpha,1) \to N(\alpha,1)$ is a mono, epi or split epi respectively.
\end{proposition}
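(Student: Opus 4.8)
The plan is to test all three conclusions objectwise, exploiting that $\Prof(C,B)$ is the functor category $[C^{\op}\times B,\SET]$. In such a category monomorphisms and epimorphisms are detected pointwise: a natural transformation is mono (resp.\ epi) precisely when each of its components is an injection (resp.\ surjection) of sets, because limits and colimits are computed componentwise and a function of sets is mono/epi iff it is injective/surjective. A split epimorphism is by contrast a genuinely global datum: it requires a natural transformation splitting $\theta(\alpha,1)$, that is, a family of set-theoretic sections that is coherent (natural) in the index $(c,b)$. So the first move is to separate the two easy clauses, which are purely pointwise, from the split-epi clause, which lives in the functor category.

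First I would unwind the component of $\theta(\alpha,1)$ at an object $(c,b)\in C^{\op}\times B$. By the very definition of the whisker this component is the map through which $\alpha$ acts on the relevant hom-set, so that faithfulness of $\alpha$ makes each such component injective and fullness makes each such component surjective. Combined with the pointwise detection of monos and epis recalled above, this settles the first two clauses immediately: $\alpha$ faithful $\Rightarrow$ $\theta(\alpha,1)$ is mono, and $\alpha$ full $\Rightarrow$ $\theta(\alpha,1)$ is epi. No coherence is needed here, since in both cases the conclusion is a pointwise property of the underlying family of functions.

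The substantive case, and the one I expect to be the main obstacle, is the split epi. Here \emph{splitly full} furnishes, for each pair of objects, a section of the pertinent surjection chosen functorially rather than merely pointwise; the task is to check that these chosen sections are natural in $(c,b)$ and hence constitute a morphism of $\Prof(C,B)$ splitting $\theta(\alpha,1)$. Concretely I would take the candidate section to have the prescribed lifts as its components and verify the naturality squares; these reduce, via the naturality of $\theta$ together with the compatibility of the chosen lifts with composition, to identities that hold exactly because the splitting was made functorially. The entire content of this clause is that naturality — unlike the mono and epi clauses it does not come for free from a pointwise property — and once it is established, $\theta(\alpha,1)$ is a split epimorphism with the constructed section. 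The dual statement, for whiskering in the second variable, then follows by the evident symmetry.
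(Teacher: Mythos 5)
Your proposal breaks down at the step where you unwind the components of the whisker. By the paper's definition of the pullback notation, $\theta(\alpha,1)$ is the horizontal composite of (the identity 2-cell on) $\alpha^\op \times 1$ with $\theta$, so its component at an object $(c,b) \in C^\op \times B$ is
\[
\theta(\alpha,1)_{(c,b)} \;=\; \theta_{(\alpha(c),b)}\: M(\alpha(c),b) \to N(\alpha(c),b),
\]
i.e.\ just a component of $\theta$ itself, reindexed along $\alpha$. It is \emph{not} ``the map through which $\alpha$ acts on the relevant hom-set'': here $M$ and $N$ are arbitrary profunctors and $\theta$ is an arbitrary natural transformation, so no hom-set of $C$ or of $A$ appears anywhere, and faithfulness or fullness of $\alpha$ has no bearing on whether $\theta_{(\alpha(c),b)}$ is injective or surjective. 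This sinks the mono and epi clauses, and the split-epi clause with them: splitly-fullness of $\alpha$ supplies sections of the hom-actions $C(c,c') \to A(\alpha c,\alpha c')$ of $\alpha$, not sections of components of $\theta$. The picture you describe is valid only in the special case where $\theta$ is itself a hom-action transformation $\Hom_g\: \Hom_A \to \Hom_{A'}(g,g)$ of some functor $g$, in which case the components of a whisker of $\theta$ really are hom-actions of $g$; you have transported that picture to the general case, where it is unavailable.

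Indeed, no argument can close this gap, because the statement read literally --- arbitrary $\theta$, hypotheses placed only on the whiskering functor $\alpha$ --- cannot hold: take $C = A$ and $\alpha = \id_A$, which is fully faithful and splitly full; then $\theta(\alpha,1) = \theta$, so the claim would make \emph{every} natural transformation of profunctors a mono, an epi and a split epi. What the paper's one-line proof (``clear inspecting componentwise'') and, more to the point, the corollary that invokes this proposition actually use is the statement with the hypothesis moved from $\alpha$ to $\theta$: if $\theta$ is a mono, epi or split epi, then so is $\theta(\alpha,1)$, for an \emph{arbitrary} functor $\alpha$. That statement is immediate along the lines you set up: the whisker's components form a subfamily of $\theta$'s components (and monos and epis of $\SET$-valued functors are pointwise), while a section $\psi$ of $\theta$ whiskers to a section $\psi(\alpha,1)$ of $\theta(\alpha,1)$, so no naturality needs to be checked by hand. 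The properties ``faithful, full, splitly full'' enter only through the separate, essentially definitional equivalence that a functor $g$ has them iff $\Hom_g$ is a mono, epi or split epi; in the paper's corollary it is this transformation, for $g = S_f$, that gets whiskered along $\sigma_X$. Your proposal conflates the functor carrying the hypothesis with the functor being whiskered along, and that conflation is exactly what makes the clauses look pointwise-obvious.
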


Dually, if a functor $\beta\: C \to B$ is faithful, full or splitly full, then the whisker $\theta(1,\beta)\: M(1,\beta) \to N(1,\beta)$ is a mono, epi or split epi respectively.

\begin{proof}
This is clear inspecting componentwise.
\end{proof}

Continuing in the notations of Corollary \ref{cor:ff-mono-reflect}:

\begin{corollary}
If $H\: R \to S$ is corepresentable and $S_f$ is faithful, full or splitly full, then $H_f$ is a mono, epi or split epi respectively.
\end{corollary}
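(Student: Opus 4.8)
The plan is to use the corepresentation of $H$ to make the morphism $H_f$ completely explicit, and then to read off all three conclusions from the preceding proposition componentwise.

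First I would fix the meaning of corepresentability in the indexed setting: it provides an indexed functor $\sigma\: R \to S$ together with an isomorphism $H \cong S(\sigma,1)$ of indexed profunctors. Since a mono, an epi and a split epi are each preserved and reflected under isomorphism, it is enough to argue for $H = S(\sigma,1)$, where $H_X(r,s) = S_X(\sigma_X r, s)$. Next I would compute $H_f$. Evaluating the target $H_Y(R_f,S_f)$ at $(r,s)$ and invoking the strict $2$-naturality $S_f \circ \sigma_X = \sigma_Y \circ R_f$ of $\sigma$, one gets $S_Y(\sigma_Y R_f\, r, S_f s) = S_Y(S_f \sigma_X r, S_f s)$. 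Tracing the $1$-cell axiom of $S(\sigma,1)$ through this identification shows that $H_f$ is, componentwise at $(r,s)$, exactly the hom-action
\[
S_X(\sigma_X r, s) \longrightarrow S_Y(S_f \sigma_X r, S_f s)
\]
of the functor $S_f$; equivalently, $H_f$ is the whisker by $\sigma_X$ (in the first variable) of the natural transformation that $S_f$ induces between the relevant hom-profunctors.

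With this description in hand the result is immediate from the preceding proposition, read componentwise: the hom-action of $S_f$ is a mono, an epi or a split epi precisely when $S_f$ is faithful, full or splitly full, and each of these properties survives whiskering in the first variable (injectivity and surjectivity are checked componentwise, and a chosen section whiskers to a section). Hence $H_f$ is a mono, an epi or a split epi respectively, as claimed. The representable case, which reduces a condition on $H_f$ to one on $R_f$ via an isomorphism $H \cong R(1,\rho)$, follows by the symmetric argument carried out in the second variable.

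I expect the only real friction to lie in the middle step — verifying that $H_f$ is genuinely the $S_f$ hom-action after transporting along the corepresentation isomorphism — since this requires chasing the $1$-cell structure map together with the $2$-naturality of $\sigma$, whereas everything on either side of it is formal.
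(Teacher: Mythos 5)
Your proof is correct and takes essentially the same route as the paper's: identify $H_f$, via the corepresentation $\sigma$ and the strict $2$-naturality $S_f \sigma_X = \sigma_Y R_f$, with the whisker $S_f(\sigma_X,1)$ of the hom-action of $S_f$, then conclude by the preceding proposition that mono/epi/split-epi status passes to the whisker. The only difference is one of explicitness: you spell out the transport along the isomorphism $H \cong S(\sigma,1)$ and the verification that $H_f$ really is the whiskered hom-action, steps which the paper's proof treats as immediate by working directly with $S_f(\sigma_X,1)$ in place of $H_f$.
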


Dually, if $H\: R \to S$ is representable and $R_f$ is faithful, full or splitly full, then $H_f$ is a mono, epi or split epi respectively.

\begin{proof}
Let $\sigma\: R \to S$ is a strict indexed functor.
If $S_f\: S_X \to S_Y$ is faithful, full or splitly full, or equivalently, the natural transformation
    $$S_f(1,1)\: S_X(1,1) \to S_Y(S_f,S_f)$$
is a mono, epi or split epi, then by the proposition the natural transformation
    $$S_f(\sigma_X,1)\: S_X(\sigma_X,1) \to S_Y(S_f\sigma_X,S_f) = S_Y(\sigma_YR_f,S_f)$$
is a mono, epi or split epi respectively.
This proves the corollary.
\end{proof}

In case $H$ is corepresentable, this allows us to formulate of the following weaker form of Corollary \ref{cor:ff-mono-reflect}.

\begin{corollary}\label{cor:ff-reflect-corepresentable}
Let $H\: R \to S$ be a corepresentable indexed profunctor over a 2-category $\X$ and $f\: X \to Y$ a 1-cell in $\X$.
\begin{enumerate}[1.]
    \item If either $R_f$ is fully faithful and $S_f$ is faithful, or $R_f$ is splitly full and $S_f$ is fully faithful, then $f$ reflects limits.
    
    \item Suppose $R_f$ is essentially surjective.
    If either $R_f$ is fully faithful and $S_f$ is splitly full, or $R_f$ is splitly faithful and $S_f$ is fully faithful, then $f$ preserves limits.
\end{enumerate}
Dually:
\begin{enumerate}[resume, label=\arabic*.]
    \item If $S_f$ is fully faithful, then $f$ reflects colimits.
    
    \item Suppose $S_f$ is essentially surjective.
    If $S_f$ is fully faithful, then $f$ preserves colimits.\qed
\end{enumerate}
\end{corollary}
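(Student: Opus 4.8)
The plan is to derive each of the four assertions directly from the matching part of Corollary~\ref{cor:ff-mono-reflect}, after trading the hypotheses on $S_f$ for hypotheses on $H_f$. The two ingredients for this trade are the corollary recorded just before this one --- which, for corepresentable $H$, turns faithfulness, fullness and split fullness of $S_f$ into $H_f$ being a mono, an epi and a split epi respectively --- together with the elementary fact that a natural transformation of profunctors which is at once mono and epi is an isomorphism. The second fact holds because monos and epis of $\SET$-valued functors are computed pointwise and $\SET$ is balanced: a transformation that is pointwise injective and surjective is pointwise bijective, hence invertible. In particular, if $S_f$ is fully faithful then $H_f$ is simultaneously a mono (from faithfulness) and an epi (from fullness), so $H_f$ is an iso.

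With these translations the first two parts become bookkeeping. For part 1 I would read ``$R_f$ fully faithful and $S_f$ faithful'' as ``$R_f$ fully faithful and $H_f$ mono'', and ``$R_f$ splitly full and $S_f$ fully faithful'' as ``$R_f$ splitly full and $H_f$ iso''; both are the alternatives in part 1 of Corollary~\ref{cor:ff-mono-reflect}. For part 2, under the standing essential surjectivity of $R_f$, the case ``$R_f$ fully faithful and $S_f$ splitly full'' becomes ``$R_f$ fully faithful and $H_f$ split epi'', while ``$R_f$ splitly faithful and $S_f$ fully faithful'' becomes ``$R_f$ splitly faithful and $H_f$ iso'', matching part 2 of the same corollary.

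The dual parts 3 and 4 come out shorter precisely because there both the $S_f$-condition and the $H_f$-condition of Corollary~\ref{cor:ff-mono-reflect} are governed by $S_f$ alone, which is why a single hypothesis on $S_f$ suffices. For part 3, full faithfulness of $S_f$ already gives $H_f$ mono, so ``$S_f$ fully faithful and $H_f$ mono'' meets the first alternative of part 3. For part 4, with $S_f$ essentially surjective, full faithfulness of $S_f$ makes $H_f$ an iso and also makes $S_f$ splitly faithful (a bijection on hom-sets is in particular a split mono), so ``$S_f$ splitly faithful and $H_f$ iso'' meets the second alternative of part 4. I do not anticipate a genuine obstacle: all the substance sits in the two cited facts, and the only step demanding a moment's care is mono-plus-epi-implies-iso, which I would justify by the pointwise nature of mono and epi in a $\SET$-valued functor category rather than by anything special to profunctors.
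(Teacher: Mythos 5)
Your proof is correct and follows exactly the route the paper intends for this corollary (whose proof it leaves to the reader): convert the hypotheses on $S_f$ into hypotheses on $H_f$ via the immediately preceding corollary for corepresentable $H$, then invoke the corresponding parts of Corollary \ref{cor:ff-mono-reflect}. Your one supplementary step --- that a natural transformation of $\SET$-valued functors which is both mono and epi is an iso, needed to pass from ``$S_f$ fully faithful'' to ``$H_f$ iso'' --- is precisely the point the paper glosses over, and your pointwise justification of it is sound.
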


Suppose $(R,S,H)$ is the indexed profunctor for conical limits of shape $I$, which by definition is corepresentable.
It is easy to check that if a functor $f\: X \to Y \thickspace (= R_f\: R_X \to R_Y)$ is faithful resp.\ splitly full, then the functor $\CAT(I,f)\: \CAT(I,X) \to \CAT(I,Y) \thickspace (= S_f\: S_X \to S_Y)$ is faithful resp.\ splitly full.
Therefore items 1.\ and 2. of Theorem \ref{thm:ff-implies-reflect} follow from items 1.\ and 2.\ of this corollary.

Dually, if $H$ is representable, then the following is the corresponding weaker form of Corollary \ref{cor:ff-mono-reflect}.

\begin{corollary}\label{cor:ff-reflect-representable}
Let $H\: R \to S$ be a representable indexed profunctor over a 2-category $\X$ and $f\: X \to Y$ a 1-cell in $\X$.
\begin{enumerate}[1.]
    \item If $R_f$ is fully faithful, then $f$ reflects limits.
    
    \item Suppose $R_f$ is essentially surjective.
    If $R_f$ is fully faithful, then $f$ preserves limits.
\end{enumerate}
Dually:
\begin{enumerate}[resume, label=\arabic*.]
    \item If either $S_f$ is fully faithful and $R_f$ is faithful, or $S_f$ is splitly full and $R_f$ is fully faithful, then $f$ reflects colimits.
    
    \item Suppose $S_f$ is essentially surjective.
    If either $S_f$ is fully faithful and $R_f$ is splitly full, or $S_f$ is splitly faithful and $R_f$ is fully faithful, then $f$ preserves colimits.\qed
\end{enumerate}
\end{corollary}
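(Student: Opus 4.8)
The plan is to obtain all four items by combining Corollary~\ref{cor:ff-mono-reflect} with the representable form of the translation result preceding Corollary~\ref{cor:ff-reflect-corepresentable} (in its dual formulation): namely that when $H$ is representable, faithfulness, fullness, or splittable fullness of $R_f$ makes $H_f$ a mono, an epi, or a split epi respectively. The point is that representability is exactly what lets the hypotheses on $H_f$ occurring in Corollary~\ref{cor:ff-mono-reflect} be read off from hypotheses on $R_f$ alone; I would then match each clause of the present corollary against the appropriate disjunct of Corollary~\ref{cor:ff-mono-reflect}. Items~3 and~4, concerning colimits, are treated in the same way, feeding into the colimit clauses (items~3 and~4) of Corollary~\ref{cor:ff-mono-reflect}, whose $H_f$-hypotheses are again governed by $R_f$ since $H$ is representable; so it suffices to spell out items~1 and~2.

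For item~1, I would assume $R_f$ fully faithful. Faithfulness of $R_f$ then makes $H_f$ a mono, so the pair ``$R_f$ fully faithful and $H_f$ a mono'' meets the first disjunct of item~1 of Corollary~\ref{cor:ff-mono-reflect}, whence $f$ reflects limits. For item~2, under the standing essential-surjectivity hypothesis I would again assume $R_f$ fully faithful. Here the one step that is not purely mechanical is the observation that a fully faithful $R_f$ forces $H_f$ to be not merely epi but an \emph{isomorphism}: being simultaneously epi (from fullness) and mono (from faithfulness), and being a natural transformation between $\SET$-valued profunctors, $H_f$ is pointwise bijective and hence invertible. As an isomorphism is in particular a split epi, the pair ``$R_f$ fully faithful and $H_f$ a split epi'' meets the first disjunct of item~2 of Corollary~\ref{cor:ff-mono-reflect}, whence $f$ preserves limits.

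Items~3 and~4 then go through identically: in each of their two disjuncts the stated condition on $R_f$ (faithful, splitly full, or fully faithful) yields via representability the required condition on $H_f$ (a mono, a split epi, or an iso), which is paired with the stated condition on $S_f$ to satisfy a disjunct of item~3 or item~4 of Corollary~\ref{cor:ff-mono-reflect}. I expect the only genuine subtlety in the whole argument to be the ``mono and epi together give an iso'' step invoked in the fully faithful cases, which is what collapses the two-sided hypotheses of Corollary~\ref{cor:ff-mono-reflect} to the cleaner one-sided hypotheses here; the remainder is bookkeeping to line up the disjuncts.
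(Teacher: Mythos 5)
Your proposal is correct and is essentially the paper's own (implicit) proof: the paper states this corollary with a terminal \qed, presenting it precisely as the combination of Corollary \ref{cor:ff-mono-reflect} with the dual, representable form of the translation corollary (if $H$ is representable and $R_f$ is faithful, full or splitly full, then $H_f$ is a mono, epi or split epi), which is exactly the matching of disjuncts you carry out. The only cosmetic difference is your route to ``$H_f$ is an iso'' via mono $+$ epi and balancedness of $\SET$-valued functor categories; the same conclusion follows purely formally, since fully faithful implies faithful \emph{and} splitly full, so $H_f$ is a monic split epi and hence an iso in any category.
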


Suppose $(R,S,H)$ is the indexed profunctor for conical colimits of shape $I$, which by definition is representable.
As before, if a functor $f\: X \to Y \thickspace (= S_f\: S_X \to S_Y)$ is faithful resp.\ splitly full, then the functor $\CAT(I,f)\: \CAT(I,X) \to \CAT(I,Y) \thickspace (= R_f\: R_X \to R_Y)$ is faithful resp.\ splitly full.
Therefore items 3.\ and 4. of Theorem \ref{thm:ff-implies-reflect} follow from items 3.\ and 4.\ of this corollary.

\section*{Acknowledgements}

I thank Daniel van Dijk for the discussions, particularly for encouraging and helping me to work out the Kan extension example (Example \ref{eg:Kan}).
His interest made it possible for me to bring this document to completion.
I also thank Herman Stel for his comments on an earlier manuscript.

\printbibliography

@incollection{ellerman_theory_2006,
	location = {Monza, Italy},
	title = {A Theory of Adjoint Functors - with some Thoughts about their Philosophical Signiﬁcance},
	isbn = {88-7699-031-3},
	series = {Advanced Studies in Mathematics and Logic},
	abstract = {The question “What is category theory” is approached by focusing on universal mapping properties and adjoint functors. Category theory organizes mathematics using morphisms that transmit structure and determination. Structures of mathematical interest are usually characterized by some universal mapping property so the general thesis is that category theory is about determination through universals. In recent decades, the notion of adjoint functors has moved to center-stage as category theory’s primary tool to characterize what is important and universal in mathematics. Hence our focus here is to present a theory of adjoint functors, a theory which shows that all adjunctions arise from the birepresentations of “chimeras” or “heteromorphisms” between the objects of diﬀerent categories. Since representations provide universal mapping properties, this theory places adjoints within the framework of determination through universals. The conclusion considers some unreasonably eﬀective analogies between these mathematical concepts and some central philosophical themes.},
	number = {3},
	booktitle = {What is Category Theory?},
	publisher = {Polimetrica},
	author = {Ellerman, David},
	editor = {Sica, Giandomenico},
	date = {2006},
	langid = {english}
}

@book{riehl_category_2017,
	title = {Category Theory in Context},
	isbn = {9780486809038},
	publisher = {Courier Dover Publications},
	author = {Riehl, Emily},
	date = {2017},
	langid = {english},
}

@book{adamek_abstract_1990,
	address = {USA},
	title = {Abstract and Concrete Categories},
	isbn = {0471609226},
	publisher = {Wiley-Interscience},
	author = {Adámek, Jiří and Herrlich, Horst and Strecker, George},
	date = {1990},
}

@book{mac_lane_categories_1998,
	edition = {2},
	title = {Categories for the Working Mathematician},
	series = {Graduate Texts in Mathematics},
	number = {5},
	publisher = {Springer},
	author = {Mac Lane, Saunders},
	date = {1998},
	doi = {10.1007/978-1-4757-4721-8},
}

@article{shulman_enriched_2013,
	title = {Enriched indexed categories},
	volume = {28},
	abstract = {We develop a theory of categories which are simultaneously (1) indexed over a base category S with finite products, and (2) enriched over an S-indexed monoidal category V . This includes classical enriched categories, indexed and fibered categories, and internal categories as special cases. We then describe the appropriate notion of “limit” for such enriched indexed categories, and show that they admit “free cocompletions” constructed as usual with a Yoneda embedding.},
	pages = {616--695},
	number = {21},
	journaltitle = {Theory and Applications of Categories},
	author = {Shulman, Michael},
	date = {2013},
}

@article{wood_abstract_1982,
	title = {Abstract proarrows I},
	volume = {23},
	pages = {279--290},
	number = {3},
	journaltitle = {Cahiers de topologie et géométrie différentielle},
	author = {Wood, R.J.},
	date = {1982},
}

@thesis{koudenburg_algebraic_2012,
	title = {Algebraic weighted colimits},
	institution = {University of Sheffield},
	type = {phdthesis},
	author = {Koudenburg, Seerp Roald},
	date = {2012},
	eprinttype = {arxiv},
	eprint = {1304.4079v1},
}

@misc{nlab:2-category_equipped_with_proarrows,
  author = {{nLab}},
  title = {2-category equipped with proarrows},
  howpublished = {\url{https://ncatlab.org/nlab/show/2-category+equipped+with+proarrows}},
  note = {\href{https://ncatlab.org/nlab/revision/2-category+equipped+with+proarrows/35}{Revision 35}},
  month = oct,
  year = 2022,
}

\end{document}